\newcommand{\R}{\mathbb{R}}
\newcommand{\inner}[2]{\langle{#1},{#2}\rangle}
\newcommand{\Inner}[2]{\left\langle{#1},{#2}\right\rangle}
\newcommand{\norm}[1]{\|#1\|}
\newcommand{\tos}{\rightrightarrows}
\newtheorem{theorem}{Theorem}[section]
\newtheorem{lemma}[theorem]{Lemma}
\newtheorem{corollary}[theorem]{Corollary}
\newtheorem{remark}[theorem]{Remark}
\newtheorem{proposition}[theorem]{Proposition}
\newtheorem{definition}[theorem]{Definition}
\begin{document}
\title{A class of 
Fej\'er convergent algorithms,
approximate resolvents and the
 Hybrid Proximal-Extragradient method}

\author{B. F. Svaiter \thanks{IMPA, Estrada Dona Castorina 110,
     22460-320 Rio de Janeiro, Brazil ({\tt benar@impa.br}) \ Partially
     supported by
     CNPq grants 
     302962/2011-5, 
     474944/2010-7, 
     FAPERJ grant
     E-26/102.940/2011 
     and by
     PRONEX-Optimization.}}
\date{}
\maketitle

\begin{abstract}
  A new framework for analyzing Fej\'er convergent algorithms is
  presented. Using this framework we define a very general class
  of Fej\'er convergent algorithms and establish its convergence
  properties.
  We also introduce a new definition of approximations of resolvents
  which preserve some useful features of the exact resolvent, and
  use this concept to present  an unifying view of the
  Forward-Backward splitting method, Tseng's Modified Forward-Backward
  splitting method and Korpelevich's method.
  We show that methods based on families of approximate resolvents
  fall within the aforementioned class of Fej\'er convergent methods.
  We prove that such approximate resolvents are the iteration maps
  of the Hybrid Proximal-Extragradient method.

  \bigskip

  \noindent \textbf{2010 Mathematics Subject Classification:} 47H05,
  49J52, 47N10.
  
  \bigskip
  
  \noindent {\bfseries Keywords:} Fej\'er convergence, approximate
  resolvent, Hybrid Proximal-Extragradient
\end{abstract}

\pagestyle{plain}

\section{Introduction}

In this work we introduce a new framework for analysing Fej\'er convergent
algorithms in Hilbert spaces, by means of recursive inclusions and
sequences of point-to-set maps.
This framework defines a new class of Fej\'er convergent methods,
which is general
enough to encompass, for example, the classical Forward-Backward
splitting method, Tseng's Modified Forward-Backward splitting method
and Korpelevich's method.
Using this framework, we prove for good that convergence with
summable errors is a generic property of a large class of Fej\'er
convergent algorithms.
Therefore, we regard this convergence result (with summable errors) as
a rather negative result, in the sense that it is too generic to
convey useful information on Fej\'er convergent methods.
For sure, this kind of convergence for particular Fej\'er
convergent algorithms lacks particular value.

Another original contribution of this work is the concept of
approximate resolvents of maximal monotone operators.
Approximate resolvents retain the relevant features of exact
resolvents as iteration maps for finding zeros of maximal monotone
operators, their elements are more easily computable and are indeed
calculated in, for instance, the classical Forward-Backward splitting
method, Tseng's Modified Forward-Backward splitting method and
Korpelevich's method.
We prove that any algorithm based on approximate resolvents fall within
the above mentioned class of Fej\'er convergent methods, providing
an unifying framework for establishing their convergence properties.

We present a new transportation formula for cocoercive operators and
use it for establishing that the Forward-Backward splitting method
is a particular case of the Hybrid Proximal-Extragradient method.
The relationship between the Hybrid Proximal-Extragradient method and
approximate resolvents is also discussed.

This work is organized as follows.
In Section~\ref{sec:bas},
we introduce some basic definitions and results.
In Section~\ref{sec:class.fc}, we define a very general class of
Fej\'er convergent by means of recursive inclusions and sequences of
point-to-set maps satisfying two basic properties.
In Section~\ref{sec:apres},
we define approximate resolvents, show that they are the iteration
maps of the Hybrid Proximal-Extragradient method and prove that
methods based on approximate resolvents fall within the aforementioned class
of Fej\'er convergent methods.
In Section~\ref{sec:fv-bw},
we show that the Forward-Backward method is
based on approximate resolvents, which is to say that it is a particular case
of the Hybrid Proximal-Extragradient method.
In Section~\ref{sec:ts.mfb},
we recall that Tseng's Modified Forward-Backward method is a
particular case of the Hybrid Proximal-Extragradient method, which is
to say that it is based on approximate resolvents.
In Section~\ref{sec:kp},
we recall that Korpelevich's method is a particular case of the Hybrid
Proximal-Extragradient method, which is to say that it is based on
approximate resolvents.
In Section~\ref{sec:conc},
we make  some comments.

\section{Basic definitions and results}
\label{sec:bas}

In the first part of this section, we review the concept and
properties of Quasi-Fej\'er convergence, which will be used in our
analysis of a class of Fej\'er convergent methods.
In the second part, we establish the notation concerning point-to-set
maps, which will be used for defining the aforementioned class.

The last part of this section contains the material 
needed to define approximate resolvents and the Hybrid
Proximal-Extragradient (HPE) method, to prove that methods based on
approximate resolvents/HPE method belong to the aforementioned class,
and that some well known decomposition methods are based on
approximate resolvents/HPE method.

As far as we know, this section contains just one original result, namely,
Lemma~\ref{lm:tr.cc}, which is a transportation formula for cocoercive
operators.

\subsection*{Quasi-Fej\'er convergence}

The concept of Quasi-Fej\'er convergence was introduce by
Ermol$'$ev~\cite{MR0302181}  in the context of sequences of random
variables (see also~\cite{MR0246381} and its
translation~\cite{MR0322915}).  We will use a deterministic version of this notion,
considered first in metric spaces by Isuem, Svaiter and
Teboulle~\cite[Definition 4.1]{ITS-Maryland,MR1304625}, in Euclidean
spaces~\cite[Definition 1]{MR1314370}, in Hilbert spaces
in~\cite{MR1617701}, in reflexive Banach spaces in~\cite{MR1613463}.

All this material is now standard knowledge and is included for the
sake of completeness. We do not claim to give \emph{here} any original
contribution to this over-studied concept.
We will use an  arbitrary exponent $p$ just to unify the results related
in the references;  its particular value seems of little importance
as indicated by the next results.

\begin{definition}
  \label{df:qf}
  Let $X$ be a metric space and $0<p<\infty$. A sequence $(x_n)$ in
  $X$ is $p$-Quasi-Fej\'er convergent to $\Omega\subset X$ if, for each
  $x^*\in \Omega$, there exists a non-negative, summable sequence
  $(\rho_n)$ such that
  \[
  d(x^*,x_{n})^p\leq d(x^*,x_{n-1})^p+\rho_n\qquad n=1,2,\dots
  \]
\end{definition}

Note that if $\rho_1=\rho_2=\cdots=0$ in the above definition, we
retrieve the classical definition of Fej\'er convergence and the exponent
$p$ becomes immaterial.
Ermol$'$lev considered the stochastic case with
$p=2$ and the deterministic case was considered in~\cite{ITS-Maryland,MR1304625}
with $p=1$ and in \cite{MR1314370,MR1617701} with $p=2$ in Euclidean
and Hilbert spaces respectively.
The next proposition summarizes the main properties of
Quasi-Fej\'er convergent sequences in metric spaces.

\begin{proposition}
  \label{pr:qf1}
  Let $X$ be a metric space, $p\in (0,\infty)$ and $(x_n)$ be a
  sequence in $X$ which is $p$-Quasi-Fej\'er convergent to
  $\Omega\subset X$ then,
  \begin{enumerate}
  \item if $\Omega$ is non-empty, then $(x_n)$ is bounded;
  \item for any $x^*\in \Omega$ there exists \(
    \lim_{n\to\infty}d(x^*,x_n)<\infty \);
  \item if the sequence $(x_n)$ has a cluster point $x^*\in \Omega$,
    then it converges to such a point.
 \end{enumerate}
\end{proposition}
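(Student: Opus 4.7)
The plan is to exploit the single inequality $d(x^*,x_n)^p \leq d(x^*,x_{n-1})^p + \rho_n$ directly, deriving each of the three conclusions from standard telescoping or monotone-sequence arguments. All three parts rest on the same observation: the summability of $(\rho_n)$ turns the recursion into a bounded, nearly monotone scheme that can be tamed by subtracting the appropriate tail.

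For part (1), I would fix any $x^* \in \Omega$ with its associated summable sequence $(\rho_n)$, iterate the defining inequality down to index $0$, and obtain
\[
d(x^*,x_n)^p \leq d(x^*,x_0)^p + \sum_{k=1}^{n}\rho_k \leq d(x^*,x_0)^p + \sum_{k=1}^{\infty}\rho_k,
\]
which is finite and independent of $n$. Since one element of $X$ is at bounded $d^p$-distance from all $x_n$, the sequence $(x_n)$ is bounded in $X$.

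For part (2), I would set $a_n := d(x^*,x_n)^p$ and $s_n := \sum_{k=n+1}^{\infty}\rho_k$, which is finite and tends to $0$ as $n\to\infty$ because $(\rho_n)$ is summable. The defining inequality rearranges as $a_n + s_n \leq a_{n-1} + s_{n-1}$, so $(a_n + s_n)$ is a non-increasing sequence bounded below by $0$, hence convergent. Since $s_n \to 0$, the limit $\lim_{n\to\infty} a_n$ exists, and by part (1) it is finite; raising to the power $1/p$ gives the desired limit of $d(x^*,x_n)$.

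For part (3), I would combine part (2) with the cluster-point hypothesis. If $x^* \in \Omega$ is a cluster point of $(x_n)$, some subsequence $(x_{n_k})$ satisfies $d(x^*,x_{n_k}) \to 0$. Part (2) ensures that $\lim_n d(x^*,x_n)$ exists, and the subsequence identifies this limit as $0$, so $x_n \to x^*$. The only mildly delicate step is the monotonization trick in part (2); once one sees to subtract the tail sum $s_n$ rather than work with $a_n$ directly, the whole proposition reduces to elementary real analysis, and no step should present a real obstacle.
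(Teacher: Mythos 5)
Your proof is correct and follows essentially the same route as the paper: telescope the defining inequality and use summability of $(\rho_n)$. The only cosmetic difference is in part (2), where you monotonize by passing to $a_n + s_n$ with $s_n$ the tail sum, whereas the paper concludes directly that $\limsup_m d(x^*,x_m)^p \leq \liminf_n d(x^*,x_n)^p$ by taking $\liminf$ over $n$ in the telescoped inequality; both are valid one-line finishes of the same argument.
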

\begin{proof}
  Take $x^*\in \Omega$ and let $(\rho_n)$ be as in
  Definition~\ref{df:qf}. Then for $n<m$
  \[
  d(x^*,x_m)^p\leq d(x^*,x_n)^p+\sum_{i=n+1}^m\rho_i.
  \]
  Hence
  \[
  \lim\sup_{m\to\infty} d(x^*,x_m)^p\leq d(x^*,x_n)^p+\sum_{i=n+1}^\infty
  \rho_i<\infty,
  \]
  which proves item 1.
  To prove item 2, note that
  $(\rho_n)$ is summable and take the $\lim\inf_{n\to\infty}$
  at the right hand
  side of the  first inequality in the above equation.
  Item 3 follows trivially from item 2.
\end{proof}

Now we recall Opial's Lemma~\cite{MR0211301}, which is useful
for analyzing Quasi-Fej\'er convergence in Hilbert spaces:

\begin{lemma}[Opial]
  \label{lm:op}
  If, in a Hilbert space $X$, the sequence $(x_n)$ is weakly convergent
  to $x^*$, then for any $x\neq x^*$
  \[
  \lim\inf_{n\to\infty}\norm{x_n-x}>  \lim\inf_{n\to\infty}\norm{x_n-x^*}.
  \]
\end{lemma}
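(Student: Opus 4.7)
The plan is to reduce the claim to a comparison between the squared norms, where the Hilbert-space parallelogram-style identity gives an exact formula. Concretely, for each $n$ I would expand
\[
\norm{x_n-x}^2 = \norm{(x_n-x^*)+(x^*-x)}^2 = \norm{x_n-x^*}^2 + 2\inner{x_n-x^*}{x^*-x} + \norm{x^*-x}^2.
\]
This identity is the essential reason Opial's lemma holds in Hilbert spaces; without it one cannot in general separate the roles of $x^*$ and $x$.

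Next I would use the hypothesis that $x_n \rightharpoonup x^*$, which by definition means $\inner{x_n-x^*}{y} \to 0$ for every $y \in X$. Applying this with $y=x^*-x$ shows that the cross-term converges to zero. Since adding a constant and a convergent sequence to a sequence preserves $\liminf$ exactly, I obtain
\[
\liminf_{n\to\infty}\norm{x_n-x}^2 = \liminf_{n\to\infty}\norm{x_n-x^*}^2 + \norm{x^*-x}^2.
\]
Weak convergence implies $(x_n)$ is bounded, so both $\liminf$'s are finite real numbers.

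Because $x\neq x^*$, the term $\norm{x^*-x}^2$ is strictly positive, so the last display gives a strict inequality between the $\liminf$'s of the squares. Finally I would pass from squares to norms by invoking the fact that $t\mapsto \sqrt{t}$ is continuous and strictly increasing on $[0,\infty)$, which implies $\liminf \norm{x_n-y} = \sqrt{\liminf \norm{x_n-y}^2}$ for each $y$; monotonicity of the square root then transfers the strict inequality to the norms themselves, yielding the conclusion.

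There is no real obstacle here; the proof is essentially a one-line computation once one writes down the Hilbert-space identity. The only point that requires a moment of care is the legitimacy of taking $\liminf$ on both sides of the expanded identity, which is justified because the cross-term has a genuine limit (namely $0$) and the remaining constant term is independent of $n$.
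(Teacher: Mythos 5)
Your proof is correct; the paper itself gives no proof of this lemma (it simply recalls it from Opial's original article), and your argument --- expanding $\norm{x_n-x}^2$ around $x^*$, killing the cross term by weak convergence, and noting that the constant $\norm{x^*-x}^2>0$ shifts the $\liminf$ strictly upward before taking square roots --- is exactly the standard proof. All the steps you flag (exact preservation of $\liminf$ under addition of a convergent sequence, finiteness via boundedness of weakly convergent sequences, and commuting $\liminf$ with the continuous increasing map $t\mapsto\sqrt{t}$) are justified as you state them.
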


The next result was proved in~\cite{MR0410483}, for the case of a
specific sequence generated by an inexact proximal point method with
$p=1$, but the proof presented there is quite general, and we provide
it here for the sake of completeness. 
The idea of using Opial's Lemma seems to be due to H. Brezis.
Latter on this result was explicitly proved for Quasi-Fej\'er
convergent sequences in Hilbert and Banach spaces with $p=2$ in
\cite[Proposition 1]{MR1617701}, \cite[Lemma 2.8]{MR1613463}
respectively.

\begin{proposition}
  \label{pr:qf2}
  If, in a Hilbert space $X$, the sequence $(x_n)$ is $p$-Quasi-Fej\'er
  convergent to $\Omega\subset X$, then it has at most one weak
  cluster point in $\Omega$.
\end{proposition}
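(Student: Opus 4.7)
The plan is to argue by contradiction using the two tools already in hand: item 2 of Proposition~\ref{pr:qf1}, which gives existence of the limits $\lim_n d(x^*,x_n)$ for every $x^*\in\Omega$, and Opial's Lemma, which compares such limits along weakly convergent subsequences.

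Suppose $x^*,y^*\in\Omega$ are two distinct weak cluster points of $(x_n)$. By Proposition~\ref{pr:qf1}(2) the limits
\[
\ell_1 := \lim_{n\to\infty}\norm{x_n-x^*}^p, \qquad \ell_2 := \lim_{n\to\infty}\norm{x_n-y^*}^p
\]
exist and are finite. Since the map $t\mapsto t^p$ is strictly increasing on $[0,\infty)$, it suffices to work with $\norm{x_n-x^*}$ and $\norm{x_n-y^*}$ themselves; each of these sequences of non-negative reals converges, so along any subsequence the $\liminf$ equals the full limit.

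Now extract a subsequence $(x_{n_k})$ with $x_{n_k}\rightharpoonup x^*$. By Opial's Lemma applied to this subsequence,
\[
\liminf_{k\to\infty}\norm{x_{n_k}-y^*} > \liminf_{k\to\infty}\norm{x_{n_k}-x^*}.
\]
Since both norm sequences converge along the whole sequence, the left side is $\lim_n\norm{x_n-y^*}$ and the right side is $\lim_n\norm{x_n-x^*}$. Repeating the argument with a subsequence weakly convergent to $y^*$, one obtains the reverse strict inequality. These two strict inequalities are incompatible, yielding the contradiction.

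The only delicate point — really a bookkeeping one — is making sure the exponent $p$ does not cause trouble; this is handled by the observation above that monotonicity of $t\mapsto t^p$ makes convergence of $\norm{x_n-x^*}^p$ equivalent to convergence of $\norm{x_n-x^*}$, so Opial's Lemma (stated for norms) applies directly. There is no real obstacle beyond that; the result is essentially an immediate marriage of Quasi-Fej\'er convergence (which controls the full limits) with Opial's Lemma (which controls $\liminf$'s along weakly convergent subsequences).
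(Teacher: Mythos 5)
Your proof is correct and follows essentially the same route as the paper's: both combine item 2 of Proposition~\ref{pr:qf1} (existence of the full limits $\lim_n\norm{x_n-x^*}$) with Opial's Lemma along a weakly convergent subsequence, and derive a contradiction from the two symmetric strict inequalities. Your extra remark about the exponent $p$ is a harmless bookkeeping point that the paper leaves implicit.
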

\begin{proof}
  If $x^*\in \Omega$ is a weak cluster point of $(x_n)$, then there
  exists a subsequence $(x_{n_k})$ weakly convergent to
  $x^*$. Therefore, using item 2 of Proposition~\ref{pr:qf1} and
  Opial's Lemma, we conclude that for any $x'\in \Omega$, $x'\neq x^*$
\begin{align*}
  \lim_{n\to\infty}\norm{x_n-x'}=
 \lim\inf_{k\to\infty}\norm{x_{n_k}-x'}>  \lim\inf_{k\to\infty}\norm{x_n-x^*}
  = \lim_{n\to\infty}\norm{x_n-x^*}
\end{align*}
 which trivially implies the desired result.
\end{proof}

It is trivial that the specific value of $p\in (0,\infty)$ is
immaterial in the above proofs.  It would be preposterous to claim
that for each $p$ one has a ``specific kind'' of Quasi-Fej\'er
convergence.  We hope to reinforce this point of view with the next
remark.

\begin{remark}
  \label{rm:tr}
  Let $X$ be a metric space, $p\in (0,\infty)$ and $(x_n)$ be a
  sequence in $X$ which is $p$-Quasi-Fej\'er convergent to
  $\Omega\subset X$ then either,
  \begin{enumerate}
  \item $d(x_n,x^*)\to 0$ for some $x^*\in \Omega$;
  \item $(x_n)$ is $q$-Quasi-Fej\'er convergent to
   $\Omega$ for any $q\in (0,\infty)$.
 \end{enumerate}
\end{remark}

From now on, $p$-Quasi-Fej\'er convergence will be called simply
Quasi-Fej\'er convergence, the exponent $p$ being $1$ unless
otherwise stated.

\subsection*{Point-to-set operators}

Let $X$, $Y$ be arbitrary sets. A point-to-set map $F:X\tos Y$ is a
function $F:X\to \wp(Y)$, where $\wp(Y)$ is the power set of $Y$, that
is, the family of all subsets of $Y$. If $F(x)$ is a singleton for all
$x$, that is, a set with just one element, one says that $F$ is
point-to-point.  Whenever necessary, we will identify a point-to-point
map $F:X\tos Y$ with the unique function $f:X\to Y$ such that
$F(x)=\{f(x)\}$ for all $x\in X$,

A point-to-set map $F:X\tos Y$ is \emph{$L$-Lipschitz} if
$X$ and $Y$ are normed vector spaces and,
\begin{equation}
  \label{eq:lip}
  \emptyset\neq F(x')\subset \{y+u\;|\; y\in F(x),\;u\in Y,\;
  \norm{u}\leq L\norm{x-x'}\},
  \qquad \forall x,x'\in X.
\end{equation}
Note that if $F$ is point-to-point and it is identified with a
function, then in the above definition we retrieve the classical
notion of a $L$-Lipschitz continuous function.

\subsection*{Maximal monotone operators and the $\varepsilon$-enlargement}

The $\varepsilon$-enlargement of a maximal monotone operators will
be used to define approximate resolvents in Section~\ref{sec:apres}.
In this section we review the definition of the $\varepsilon$-enlargement
and discuss those of its properties which will be used in the analysis
and applications of approximate resolvents.

From now on, $X$ is a real Hilbert space.
Recall that a point-to-set operator
$T:X\tos X$ is \emph{monotone} if
\[
\inner{x-y}{u-v}\geq 0\qquad \forall x,y\in X,\;u\in T(x),v\in T(y),
\]
and it is \emph{maximal monotone} if it is monotone and maximal
in the family of monotone operators in $X$ with respect to the partial
order of the inclusion.

Let $T:X\tos X$ be a maximal monotone operator.
Recall that the $\varepsilon$-enlargement~\cite{MR1463929} of $T$ is defined
\begin{equation}
  \label{eq:teps}
  T^{[\varepsilon]}(x)=\{v\;|\; \inner{x-y}{v-u}\geq -\varepsilon\},\qquad
  x\in X,\varepsilon\geq 0.
\end{equation} 
Now we state some elementary properties of the $\varepsilon$-enlargement
which follow trivially from the above definition and the basic properties
of maximal monotone operators. Their proofs
can be found in~\cite{MR1463929,MR1716028,MR1802238}.
\begin{proposition}
  \label{pr:teps.bas}
   Let $T:X\tos X$ be maximal monotone. Then
  \begin{enumerate}
  \item \label{it:tz.t}$T=T^{[0]}$;
  \item \label{it:te.t} if $0\leq\varepsilon_1\leq\varepsilon_2$, then
    $T^{[\varepsilon_1]}(x)\subset T^{[\varepsilon_2]}(x)$ for any
    $x\in X$;
  \item \label{it:lte} $\lambda\left(T^{[\varepsilon]}(x)\right)
    =(\lambda T)^{[\lambda\varepsilon]}(x)$ for any $x\in X$,
    $\varepsilon\geq0$ and $\lambda>0$;
  \item \label{it:demi} if $v_k\in T^{[\varepsilon_k]}(x_k)$ for
    $k=1,2,\dots$, $(x_k)$ converges weakly to $ x$, $(v_k)$ converges
    strongly to $v$ and $(\varepsilon_k)$ converges to $\varepsilon$,
    then $v\in T^{[\varepsilon]}(x)$;
  \item \label{it:te.sd} if $T=\partial f$, where $f$ is a proper
    closed convex function in $X$, then $\partial_\varepsilon
    f(x)\subset T^{[\varepsilon]}(x)=(\partial f)^{[\varepsilon]}(x)$
    for any $x\in X$, $\varepsilon\geq 0$.
   \end{enumerate}
\end{proposition}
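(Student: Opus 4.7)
The plan is to verify each of the five items directly from the definition~\eqref{eq:teps}, invoking maximal monotonicity of $T$ only where it is essential. The core observation is that $v\in T^{[\varepsilon]}(x)$ simply asserts that the pair $(x,v)$ is monotonically related with slack $\varepsilon$ to every pair $(y,u)$ with $u\in T(y)$, so all five statements can be read off by routine manipulations of the defining inequality.

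For item~\ref{it:tz.t}, the condition $v\in T^{[0]}(x)$ reads $\inner{x-y}{v-u}\geq 0$ for every $u\in T(y)$. This is exactly the statement that adjoining the pair $(x,v)$ to the graph of $T$ preserves monotonicity, and by the maximality of $T$ this forces $v\in T(x)$; the reverse inclusion is trivial from monotonicity of $T$. Item~\ref{it:te.t} is immediate since $-\varepsilon_1\geq -\varepsilon_2$ when $\varepsilon_1\leq\varepsilon_2$. For item~\ref{it:lte}, I would use the bijection $u\mapsto\lambda u$ between $T(y)$ and $(\lambda T)(y)$; multiplying the inequality $\inner{x-y}{v-u}\geq -\varepsilon$ by $\lambda>0$ gives exactly the defining inequality for $(\lambda T)^{[\lambda\varepsilon]}(x)$ evaluated at $\lambda v$.

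For item~\ref{it:demi}, I would fix any $(y,u)$ with $u\in T(y)$ and pass to the limit in $\inner{x_k-y}{v_k-u}\geq -\varepsilon_k$. Writing
\[
\inner{x_k-y}{v_k-u}=\inner{x_k-y}{v-u}+\inner{x_k-y}{v_k-v},
\]
the first term converges to $\inner{x-y}{v-u}$ by weak convergence of $(x_k)$, while the second is bounded by $\norm{x_k-y}\cdot\norm{v_k-v}\to 0$ since weakly convergent sequences are norm-bounded. For item~\ref{it:te.sd}, given $v\in\partial_\varepsilon f(x)$ and any $u\in\partial f(y)$, I would add the inequalities $f(y)\geq f(x)+\inner{v}{y-x}-\varepsilon$ and $f(x)\geq f(y)+\inner{u}{x-y}$, obtaining $\inner{x-y}{v-u}\geq-\varepsilon$, i.e., $v\in (\partial f)^{[\varepsilon]}(x)$. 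The equality $T^{[\varepsilon]}(x)=(\partial f)^{[\varepsilon]}(x)$ is just a restatement of the hypothesis $T=\partial f$.

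None of these steps presents a real obstacle. The only place where non-elementary input is used is item~\ref{it:tz.t}, where maximality is invoked; all other items are formal consequences of the definition of $T^{[\varepsilon]}$ together with standard facts about weak convergence and the Fenchel $\varepsilon$-subgradient inequality.
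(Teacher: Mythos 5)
Your proof is correct: each item follows from the definition~\eqref{eq:teps} exactly as you describe, with maximality needed only for item~\ref{it:tz.t}, boundedness of weakly convergent sequences for item~\ref{it:demi}, and the sum of the $\varepsilon$-subgradient and subgradient inequalities for item~\ref{it:te.sd}. The paper itself omits the proof and defers to the cited references, but the arguments there are precisely these standard ones, so your approach matches the intended proof.
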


The $\varepsilon$-enlargements of two operators can be ``added''  as
follows. This fact was proved in~\cite{MR1463929} in a finite dimensional
setting, but its extension to Hilbert and Banach spaces are straightforward.

\begin{proposition}
  \label{pr:teps.sum}
  It $T_1,T_2:X\tos X$ are maximal monotone and $T_1+T_2$ is also maximal
  monotone then, for any $\varepsilon_1,\varepsilon_2\geq 0$ and $x\in X$
  \[
  T_1^{[\varepsilon_1]}(x)+  T_2^{[\varepsilon_2]}(x)
  \subset   (T_1+T_2)^{[\varepsilon_1+\varepsilon_2]}(x).
  \]
\end{proposition}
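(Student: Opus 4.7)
The plan is to unravel the definition of the $\varepsilon$-enlargement applied to the sum operator and reduce the claim to two separate applications of the defining inequality for $T_1^{[\varepsilon_1]}$ and $T_2^{[\varepsilon_2]}$.

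Concretely, I fix an arbitrary element $v$ of the left-hand side, write $v = v_1 + v_2$ with $v_1 \in T_1^{[\varepsilon_1]}(x)$ and $v_2 \in T_2^{[\varepsilon_2]}(x)$, and set out to verify that $v$ belongs to $(T_1+T_2)^{[\varepsilon_1+\varepsilon_2]}(x)$. Using the definition in \eqref{eq:teps}, this means checking that $\inner{x-y}{v-w} \geq -(\varepsilon_1+\varepsilon_2)$ for every $y \in X$ and every $w \in (T_1+T_2)(y)$. The one genuine step is then the pointwise decomposition $w = u_1 + u_2$ with $u_i \in T_i(y)$, which is immediate since $(T_1+T_2)(y) = T_1(y) + T_2(y)$ by the very definition of the sum of point-to-set maps.

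Having made that decomposition, I would apply the defining inequality of $T_1^{[\varepsilon_1]}$ at $x$ tested against the pair $(y,u_1)$ (using $u_1 \in T_1(y)$, i.e.\ the case $\varepsilon = 0$ of item~\ref{it:tz.t} of Proposition~\ref{pr:teps.bas}) to get $\inner{x-y}{v_1 - u_1} \geq -\varepsilon_1$, and analogously $\inner{x-y}{v_2 - u_2} \geq -\varepsilon_2$. Adding the two inequalities delivers exactly $\inner{x-y}{v - w} \geq -(\varepsilon_1+\varepsilon_2)$, and since $y$ and $w$ were arbitrary the claim follows.

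There is no real obstacle here: the maximal monotonicity hypothesis on $T_1$, $T_2$ and $T_1+T_2$ is used only to make sense of the enlargements on both sides via \eqref{eq:teps}; it plays no role in the inequality manipulation itself. The only point that merits a brief remark in the write-up is that the statement is a pure transportation-by-decomposition result, and in particular it does not require any regularity hypothesis (such as a constraint qualification) beyond what is already assumed to guarantee maximality of $T_1+T_2$.
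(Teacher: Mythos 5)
Your argument is correct: decomposing $w\in(T_1+T_2)(y)$ as $u_1+u_2$ with $u_i\in T_i(y)$, applying the defining inequality \eqref{eq:teps} to each pair $(v_i,u_i)$, and adding is exactly the standard proof, which the paper itself omits (it only cites \cite{MR1463929} and notes the extension to Hilbert spaces is straightforward). Your closing remark is also apt: maximality of $T_1+T_2$ is used only so that the right-hand side is the enlargement of a maximal monotone operator in the sense of \eqref{eq:teps}, not in the inequality manipulation.
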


Recall that a (maximal) monotone operator $A:X\to X$ is
\emph{$\alpha$-cocoercive} (for $\alpha>0$) if \begin{align*}
  \inner{x-y}{Ax-Ay}\geq \alpha\norm{Ax-Ay}^2,\qquad \forall x,y\in X.
\end{align*}
There is an interesting ``transportation formula'' for cocoercive operators.
This result was proved by R.D.C Monteiro and myself.

\begin{lemma}
  \label{lm:tr.cc}
  If ${A}:X\to X$ is $\alpha$-cocoercive, then for any $x,z\in X$,
  \[
  {A}(z)\in {A}^{[\varepsilon]}(x), \qquad\mbox{with } \varepsilon=
  \frac{\norm{x-z}^2}{4\alpha}.
\]
\end{lemma}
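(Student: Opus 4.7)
The plan is to unpack the definition of the $\varepsilon$-enlargement and verify the required inequality directly. By \eqref{eq:teps}, the claim $A(z)\in A^{[\varepsilon]}(x)$ amounts to
\[
\inner{x-y}{A(z)-A(y)} \geq -\varepsilon \qquad \forall y\in X,
\]
since $A$ is point-to-point, so the only element of $A(y)$ is $A(y)$ itself.

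The key algebraic idea is to split $x-y=(x-z)+(z-y)$, which yields
\[
\inner{x-y}{A(z)-A(y)} = \inner{x-z}{A(z)-A(y)} + \inner{z-y}{A(z)-A(y)}.
\]
For the second term I would invoke $\alpha$-cocoercivity of $A$, which gives $\inner{z-y}{A(z)-A(y)}\geq \alpha\norm{A(z)-A(y)}^2$. For the first term I would apply Cauchy--Schwarz in the form $\inner{x-z}{A(z)-A(y)}\geq -\norm{x-z}\norm{A(z)-A(y)}$. Setting $t=\norm{A(z)-A(y)}$ and $r=\norm{x-z}$ reduces the problem to the scalar lower bound $\alpha t^2 - rt \geq -r^2/(4\alpha)$, which is immediate from completing the square:
\[
\alpha t^2 - rt = \alpha\left(t-\frac{r}{2\alpha}\right)^2 - \frac{r^2}{4\alpha} \geq -\frac{r^2}{4\alpha}.
\]
With $r=\norm{x-z}$ this is exactly $-\varepsilon$, establishing the desired inequality.

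There is no real obstacle here; the only choice to make is the decomposition $x-y = (x-z) + (z-y)$, which is what allows cocoercivity (an inequality of quadratic type in $A(z)-A(y)$) to absorb the linear Cauchy--Schwarz bound from the ``transport'' term $x-z$. The value $\varepsilon = \norm{x-z}^2/(4\alpha)$ then appears naturally as the minimum of the resulting one-variable quadratic in $\norm{A(z)-A(y)}$.
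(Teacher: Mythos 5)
Your proof is correct and follows essentially the same route as the paper: the decomposition $x-y=(x-z)+(z-y)$, cocoercivity on the second term, Cauchy--Schwarz on the first, and the scalar minimization $\inf_{t}\alpha t^2-\norm{x-z}\,t=-\norm{x-z}^2/(4\alpha)$. You merely make explicit the completion of the square that the paper leaves as a computation.
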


\begin{proof}
  Take $y\in X$. Then
  \begin{align*}
    \inner{x-y}{{A}z-{A}y}
    &=\inner{x-z}{{A}z-{A}y}+\inner{z-y}{{A}z-{A}y}\\
    &\geq \inner{x-z}{{A}z-{A}y}+\alpha\norm{{A}z-{A}y}^2\\
    &\geq -\norm{x-z}\norm{{A}z-{A}y}+\alpha\norm{{A}z-{A}y}^2,
\end{align*}
where the first inequality follows form the cocoercivity of ${A}$ and
the second one from Cauchy-Schwarz inequality. To end the proof, note that
\[
 -\norm{x-z}\norm{{A}z-{A}y}+\alpha\norm{{A}z-{A}y}^2\geq
\inf_{t\in\R}\alpha t^2-\norm{x-z}t
\]
and compute the value of the left hand-side of this inequality.
\end{proof}

 The usefulness of the $\sigma$-approximate resolvent (to be defined
in Section~\ref{sec:apres}) follows from
the  next elementary result, essentially proved in~\cite[Lemma 2.3,
Corollary 4.2]{MR1756912}.

\begin{lemma}
  \label{lm:app.res}
  Suppose that $T:X\tos X$ is maximal monotone, $x\in X$,
  $\lambda>0$ and $\sigma\geq 0$. If
  \begin{align*}
    \left\{
    \begin{array}{l}
    v\in T^{[\varepsilon]}(y),\\ \norm{\lambda v+y-x}^2+2\lambda\varepsilon
  \leq \sigma^2\norm{y-x}^2,
    \end{array}\right.
    \qquad\mbox{ and }\qquad z=x-\lambda v,&
  \end{align*}
  then 
  \(
  \norm{\lambda v}\leq (1+\sigma)\norm{y-x}\),
  \(\norm{z-y}\leq \sigma\norm{y-x}
  \)
   and for any $x^*\in T^{-1}(0)$
  \begin{align*}
  \norm{x^*-x}^2
  &\geq \norm{x^*-z}^2+\norm{y-x}^2-
  \bigg[\norm{\lambda v+y-x}^2+2\varepsilon\bigg]\\
  &\geq \norm{x^*-z}^2+(1-\sigma^2)\norm{y-x}^2.
  \end{align*}
\end{lemma}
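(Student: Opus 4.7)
The plan is first to extract the pointwise norm bounds (1) and (2) directly from the hypothesis, and then to obtain (3) by expanding a suitable three-square identity and exploiting the $\varepsilon$-monotonicity inequality applied to the pair $(y,v)$ and $(x^*,0)$.

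For the bounds, I would note that since $z=x-\lambda v$, we have the two identities
\[
z-x=-\lambda v,\qquad z-y=(x-\lambda v)-y=-(\lambda v+y-x).
\]
Dropping the nonnegative term $2\lambda\varepsilon$ in the hypothesis yields $\|\lambda v+y-x\|\leq \sigma\|y-x\|$, and hence $\|z-y\|\leq \sigma\|y-x\|$, which is (2). For (1), the triangle inequality applied to $\lambda v=(\lambda v+y-x)-(y-x)$ gives $\|\lambda v\|\leq \sigma\|y-x\|+\|y-x\|=(1+\sigma)\|y-x\|$.

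For (3), the key step is to convert $v\in T^{[\varepsilon]}(y)$ into a usable inequality. Since $x^*\in T^{-1}(0)$ and $T=T^{[0]}$ by item~\ref{it:tz.t} of Proposition~\ref{pr:teps.bas}, we have $0\in T(x^*)$, so by the very definition of the $\varepsilon$-enlargement,
\[
\inner{y-x^*}{v-0}\geq -\varepsilon,\qquad\text{i.e.,}\qquad 2\lambda\inner{x^*-y}{v}\leq 2\lambda\varepsilon.
\]
I would then expand
\[
\norm{x^*-x}^2-\norm{x^*-z}^2=2\inner{x^*-z}{z-x}+\norm{z-x}^2=-2\lambda\inner{x^*-z}{v}+\norm{\lambda v}^2,
\]
and decompose $x^*-z=(x^*-y)+(y-x)+\lambda v$. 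The term $-2\lambda\inner{x^*-y}{v}$ is bounded below by $-2\lambda\varepsilon$ by the displayed inequality above, and the remaining linear and quadratic contributions reassemble, after completing the square, into precisely $\|y-x\|^2-\|\lambda v+y-x\|^2$. This yields the first inequality in (3).

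The second inequality in (3) is then immediate from the hypothesis $\|\lambda v+y-x\|^2+2\lambda\varepsilon\leq\sigma^2\|y-x\|^2$. The only obstacle is purely bookkeeping: keeping the $\lambda$'s, $\varepsilon$'s, and the three-term decomposition of $x^*-z$ straight so that the algebra closes exactly, with no slack other than the single monotonicity estimate.
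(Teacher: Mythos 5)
Your proposal is correct and follows essentially the same route as the paper: the two norm bounds come from dropping the nonnegative term $2\lambda\varepsilon$ plus the triangle inequality, and the main estimate comes from expanding a quadratic identity relating $\norm{x^*-x}^2$ and $\norm{x^*-z}^2$ and invoking the single enlargement inequality $\inner{x^*-y}{0-v}\geq-\varepsilon$; your algebra closes with $2\lambda\varepsilon$ in the bracket, which is what the second displayed inequality actually requires (the statement's $2\varepsilon$ there is a typo). The only difference is presentational: the paper expands $\norm{x^*-x}^2$ via the intermediate point $y$ directly, while you decompose $x^*-z$ after isolating $2\inner{x^*-z}{z-x}+\norm{z-x}^2$ — the same computation in a different order.
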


\begin{proof}
  Since $\varepsilon\geq 0$ we have $\norm{\lambda v + y - x} \leq
  \sigma\norm{y-x}$. The two first inequalities of the lemma
  follows trivially from this inequality, triangle inequality and the
  definition of $z$.

  To prove the third inequality of the lemma, take $x^*\in T^{-1}(0)$.
  Direct combination of the algebraic identities
  \begin{align*}
    \norm{x^*-x}^2
    &=\norm{x^*-z}^2+2\Inner{x^*-y}{z-x}+2\Inner{y-z}{z-x}+\norm{z-x}^2
    \\&=\norm{x^*-z}^2+2\Inner{x^*-y}{z-x}+\norm{y-x}^2-\norm{y-z}^2
  \end{align*}
  with the definition of $z$ yields
  \[
  \norm{x^*-x}^2=\norm{x^*-z}^2+2\lambda
\Inner{x^*-y}{-v}+\norm{y-x}^2-\norm{\lambda v+y-x}^2.
  \]
  Using the inclusions $0\in T(x^*)$, $v\in T^{[\varepsilon]}(y)$ and the
  definition in~\eqref{eq:teps}, we conclude that 
  $ \inner{x^*-y}{0-v}\geq -\varepsilon$.
  To end the proof, of the third inequality,
  combine this inequality with the above equations.

  The last inequality follows trivially from the third one and the assumptions
  of the lemma.
\end{proof}

\section{A class of Fej\'er convergent methods}
\label{sec:class.fc}

Let $X$ be a Hilbert space and $\Omega\subset X$.
We are concerned with iterative methods for solving problem
\begin{equation}
  \label{eq:p1}
  x\in \Omega.
\end{equation}
These methods, in their exact or inexact form, generate sequences $(x_n)$ 
by means of the \emph{recursive inclusions}
\[
x_n\in F_n(x_{n-1})\mbox{ or }x_n\in F_n(x_{n-1})+r_n, \qquad n=1,2,\dots,
\]
respectively, where $F_1:X\tos X,F_2:X\tos X,\dots$ are point-to-set
maps, $r_1,r_2\dots$ are errors and $x_0\in X$ is a starting point.
The basic elements here are the set $\Omega$ and the family of
point-to-set maps $(F_n)$, which we will call the family of
\emph{iteration-maps}.

We will consider two properties of a general family of point-to-set
maps $(F_n:X\tos X)_{n\in\mathbb{N}}$ with respect to $\Omega\subset
X$:
\begin{description}
\item[P1:]  if $\hat x\in F_n(x)$ and $x^*\in\Omega$ then
     \[
     \norm{x^*-\hat x}\leq\norm{x^*-x};
    \]
\item[P2:] if $(z_k)_{k\in\mathbb{N}}$ converges weakly to $\bar z$,
  $\hat z_k\in F_{n_k}(z_k)$  for $n_1<n_2<\cdots$ and for some $x^*\in \Omega$ 
    \[
    \lim_{k\to\infty} \norm{x^*-z_k}-\norm{x^*-\hat z_k}=0,
    \]
    then $\bar z\in \Omega$.
\end{description}
Property {\bf P1} ensures that points in the image of $F_n(x)$ are
closer (or no more distant) to $\Omega$ than $x$.  Regarding
property {\bf P2}, note that (using property {\bf P1}) we have
\[
\norm{x^*-z_k}-\norm{x^*-\hat z_k}\geq 0.
\]
The left hand-side of the above inequality measures the progress of
$\hat z_k$ toward the solution $x^*$, as compared to $z_k$. Hence,
property {\bf P2} ensures that if the progress becomes ``negligible'',
then the weak limit point of $(z_k)$
belongs to $\Omega$.
\begin{theorem}
  \label{th:main}
  Suppose that $\Omega\subset X$ is non-empty and
  $\left( F_n:X\tos X\right)$ is a sequence of point-to-set maps
  which satisfies conditions {\bf P1}, {\bf P2} with respect to
  $\Omega$.
  
  If
   \[
    x_n\in F_n(x_{n-1})+r_n,\qquad \sum\norm{r_n}<\infty
  \]
  then $(x_n)$ is Quasi-Fej\'er convergent to $\Omega$, it converges
  weakly to some $\bar x\in\Omega$ and for any $w\in\Omega$ there
  exists $\lim_{n\to\infty} \norm{x^*-x_n}$.

  Moreover, if $r_n=0$ for
  all $n$, then $(x_n)$ is Fej\'er convergent to $\Omega$.
\end{theorem}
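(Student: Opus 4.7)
The plan is to establish the four assertions in order: (a) Quasi-Fej\'er convergence of $(x_n)$ to $\Omega$; (b) existence of $\lim_n\norm{x^*-x_n}$ for each $x^*\in\Omega$; (c) weak convergence of $(x_n)$ to some $\bar x\in\Omega$; and (d) the Fej\'er conclusion when all $r_n=0$. Write $x_n=\hat x_n+r_n$ with $\hat x_n\in F_n(x_{n-1})$. Fixing any $x^*\in\Omega$, property \textbf{P1} gives $\norm{x^*-\hat x_n}\leq\norm{x^*-x_{n-1}}$, and the triangle inequality then yields $\norm{x^*-x_n}\leq\norm{x^*-x_{n-1}}+\norm{r_n}$. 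Since $\sum\norm{r_n}<\infty$, this is the defining inequality of $1$-Quasi-Fej\'er convergence (with $\rho_n=\norm{r_n}$), establishing (a); item~2 of Proposition~\ref{pr:qf1} then gives (b), and item~1 yields boundedness, so $(x_n)$ admits weak cluster points.

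The core of the argument is (c): I must show that every weak cluster point of $(x_n)$ belongs to $\Omega$, so that Proposition~\ref{pr:qf2} can be invoked to obtain uniqueness and thus weak convergence of the whole sequence. This is where the subtlety of \textbf{P2} lies. Let $(x_{n_k})$ be a subsequence converging weakly to some $\bar x$. The naive choice $\hat z_k:=\hat x_{n_k}\in F_{n_k}(x_{n_k-1})$ would force $z_k=x_{n_k-1}$, but there is no reason for $(x_{n_k-1})$ to converge weakly. The remedy is to look one step ahead: take $z_k:=x_{n_k}$ and $\hat z_k:=\hat x_{n_k+1}\in F_{n_k+1}(x_{n_k})$, so that the indices $m_k:=n_k+1$ required by \textbf{P2} are still strictly increasing and $z_k$ converges weakly to $\bar x$ by construction. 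The hypothesis of \textbf{P2} reduces to $\norm{x^*-z_k}-\norm{x^*-\hat z_k}\to 0$; but $\norm{x^*-z_k}=\norm{x^*-x_{n_k}}$ is a subsequence of $(\norm{x^*-x_n})$, which by (b) converges to some $L$, while $\norm{x^*-\hat z_k}$ differs from $\norm{x^*-x_{n_k+1}}$ by at most $\norm{r_{n_k+1}}\to 0$ and hence also tends to $L$. Thus \textbf{P2} applies and gives $\bar x\in\Omega$.

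Combining (c) with Proposition~\ref{pr:qf2}, the Quasi-Fej\'er sequence $(x_n)$ has exactly one weak cluster point in $\Omega$; since it is bounded, the whole sequence converges weakly to this unique point $\bar x\in\Omega$. Finally, (d) is immediate: when every $r_n=0$, the inequality obtained in the first step collapses to $\norm{x^*-x_n}\leq\norm{x^*-x_{n-1}}$, which is precisely Fej\'er convergence. The main obstacle, as highlighted above, is the careful index shift used to invoke \textbf{P2} from a weakly convergent subsequence of $(x_n)$ itself; the rest of the proof is a routine assembly of Proposition~\ref{pr:qf1} and Proposition~\ref{pr:qf2}.
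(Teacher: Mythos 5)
Your proof is correct and follows essentially the same strategy as the paper: decompose $x_n=\hat x_n+r_n$ with $\hat x_n\in F_n(x_{n-1})$, use \textbf{P1} plus the triangle inequality to get Quasi-Fej\'er convergence, invoke Proposition~\ref{pr:qf1} for boundedness and existence of the limits $\lim_n\norm{x^*-x_n}$, verify the hypothesis of \textbf{P2} along a weakly convergent subsequence, and finish with Proposition~\ref{pr:qf2}. The one place you diverge is the index shift when applying \textbf{P2}: the paper takes $z_k=x_{n_k-1}$ and $\hat z_k=\hat x_{n_k}$, which matches the inclusion $\hat x_{n_k}\in F_{n_k}(x_{n_k-1})$ but leaves the weak convergence of $(x_{n_k-1})$ unjustified (only $(x_{n_k})$ is known to converge weakly to $\bar x$), whereas you take $z_k=x_{n_k}$ and $\hat z_k=\hat x_{n_k+1}\in F_{n_k+1}(x_{n_k})$, so the sequence fed to \textbf{P2} is the one that actually converges weakly, and the required limit $\norm{x^*-z_k}-\norm{x^*-\hat z_k}\to 0$ still follows from the convergence of $\norm{x^*-x_n}$ together with $\norm{r_n}\to 0$. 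Your version is the more careful one; it closes a small gap in the paper's argument at no extra cost (one could also repair the paper's version by passing to a further subsequence of $(x_{n_k-1})$, but that only shows \emph{some} weak cluster point lies in $\Omega$, which is not enough, so your shift is the right fix).
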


\begin{proof}
  To simplify the proof, define
  \[
  \hat x_n=x_n-r_n.
  \]  
  Take an arbitrary $x^*\in \Omega$. Since $\hat x_n\in F_n(x_{n-1})$, 
  $\norm{x^*-\hat x_n}\leq \norm{x^*-x_{n-1}}$,
  \[
  \norm{x^*-x_n}\leq \norm{x^*-\hat x_n}+\norm{r_n}
  \leq \norm{x^*-x_{n-1}}+\norm{r_n}
  \]
  and $(x_n)_{n\in\mathbb{N}}$ is Quasi-Fej\'er convergent to
  $\Omega$.  Therefore, by Proposition~\ref{pr:qf1}, this sequence is
  bounded and there exists $\lim_{n\to\infty} \norm{x^*-x_n}<\infty$.
  Using this fact, the above equation and the assumption of $(r_n)$
  being summable we conclude that
  \[
  \lim_{n\to\infty}\norm{x^*-x_{n-1}}-\norm{x^*-\hat x_n}=0.
  \]
  Since $(x_n)_{n\in\mathbb{N}}$ is bounded it has a
  weak cluster point, say $\bar x$ and there exists a subsequence
  $(x_{n_k})_{k\in\mathbb{N}}$ which converges weakly to $\bar x$.
  The above equation shows that, in particular
  \[
  \lim_{k\to\infty}\norm{x^*-x_{n_k-1}}-\norm{x^*-\hat x_{n_k}}=0
  \]
  Therefore, using {\bf P2}, the two above equations and the inclusion
  $\hat x_{n_k}\in F_{n_k}(x_{n_k-1})$, we conclude that $\bar x\in \Omega$.
  Hence, all weak cluster points of $(x_n)$ belong to $\Omega$.
  To end the proof, use Proposition~\ref{pr:qf2}
\end{proof}

Note that properties {\bf P1}, {\bf P2} are ``inherited'' by specializations.
We state formally this result and the proof, being quite trivial, is
be omitted

\begin{proposition}
  \label{pr:sp}
  If $\left( F_n:X\tos X\right)$ is a sequence of point-to-set maps
  which satisfies conditions {\bf P1}, {\bf P2} with respect to
  $\Omega\subset X$ and $\left( G_n:X\tos X\right)$ is a sequence
  of point-to-set maps such that, for any $x\in X$
  \[
  G_n(x)\subset F_n(x),\qquad n=1,2,\dots,
  \]
  then  $\left( G_n:X\tos X\right)$ also 
  satisfies conditions {\bf P1}, {\bf P2} with respect to
  $\Omega$.
\end{proposition}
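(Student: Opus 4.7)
The plan is essentially a direct verification, exploiting the fact that both properties $\mathbf{P1}$ and $\mathbf{P2}$ are stated as implications whose hypotheses involve only the membership $\hat{x} \in F_n(x)$ (or $\hat{z}_k \in F_{n_k}(z_k)$). Since the inclusion $G_n(x) \subset F_n(x)$ holds for every $x \in X$ and every $n$, any witness to the hypothesis of these properties for the family $(G_n)$ is automatically a witness for $(F_n)$, and the conclusions transfer unchanged.

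First I would verify $\mathbf{P1}$ for $(G_n)$. Take any $n$, any $x \in X$, any $\hat{x} \in G_n(x)$, and any $x^* \in \Omega$. By the hypothesis $G_n(x) \subset F_n(x)$ we have $\hat{x} \in F_n(x)$, so $\mathbf{P1}$ for $(F_n)$ yields $\|x^* - \hat{x}\| \leq \|x^* - x\|$, which is exactly $\mathbf{P1}$ for $(G_n)$.

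Next I would verify $\mathbf{P2}$ for $(G_n)$. Suppose $(z_k)$ converges weakly to $\bar{z}$, that $\hat{z}_k \in G_{n_k}(z_k)$ with $n_1 < n_2 < \cdots$, and that for some $x^* \in \Omega$ one has $\lim_{k \to \infty} \|x^* - z_k\| - \|x^* - \hat{z}_k\| = 0$. The inclusion $G_{n_k}(z_k) \subset F_{n_k}(z_k)$ gives $\hat{z}_k \in F_{n_k}(z_k)$, so the hypotheses of $\mathbf{P2}$ for $(F_n)$ are satisfied, and we conclude $\bar{z} \in \Omega$.

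There is no real obstacle to overcome here; the statement is genuinely immediate from the definitions, which is precisely why the author chose to omit the proof. The only thing worth noting is that $\mathbf{P1}$ technically also requires $G_n(x)$ to be nonempty if one wants a nonvacuous statement, but both properties are phrased as implications and remain valid vacuously when $G_n(x) = \emptyset$, so no additional hypothesis on the $G_n$ is needed.
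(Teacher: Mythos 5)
Your proof is correct and is exactly the direct verification the paper has in mind when it declares the proof "quite trivial" and omits it: every witness for the hypotheses of \textbf{P1} or \textbf{P2} relative to $(G_n)$ is, via the inclusion $G_n(x)\subset F_n(x)$, a witness relative to $(F_n)$, and the conclusions carry over verbatim. Nothing further is needed.
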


What about compositions?  Suppose that $(F_n)$ is a sequence satisfying
{\bf P1}, {\bf P2} with respect to $\Omega\subset X$, and that
\[
F_n=G_n\circ H_n
\]
where $G_n:X\tos Y$, $H_n:Y\tos X$ and all $G_n$'s are $L$-Lipschitz
continuous ($Y$ is Hilbert). One may consider sequences
\begin{align}
  \label{eq:z}
  {y}_n\in H_n(x_{n-1})+{u}_n, \qquad
  x_n\in G_n({y}_n)+{r}_n,  
\end{align}
where $({u}_n)$ and $({r}_n)$ are summable. Since 
$x_n-{r}_n\in G_n({y}_n)$, using
also~\eqref{eq:lip}, we conclude that there exists $\hat x_n
\in G_n({y}_n-{u}_n)\subset G_n\circ H_n(x_{n-1})$
\[
\norm{\hat x_n-(x_n-{r}_n)}\leq L\;\norm{{u}_n}.
\]
Therefore, defining $s_n=x_n-\hat x_n$
 we conclude that
\[
x_n\in F_n(x_{n-1})+s_n,\qquad  
\sum\norm{s_n}\leq \sum L\norm{{u}_n}+\norm{{r}_n}<\infty.
\]
Therefore, if $\Omega\neq\emptyset$, a sequence $(x_n)$ generated as
in~\eqref{eq:z} converges weakly to some point $x^*\in\Omega$.

On may also consider compositions of $m+1$ maps
\[
F= G_{1,n}\circ G_{2,n}\dots\circ G_{m,n}\circ H_n
\]
adding summable errors in each stage, assuming each $G_{i,n}:Y_i\tos
Y_{i-1}$ to be $L$-Lipschitz continuous, $H_n:X\tos Y_m$, $Y_0=X$
etc.

\section{Approximate resolvents and the Hybrid Proximal-Extragradient
 Method}
\label{sec:apres}

In this section, first we define $\sigma$-approximate resolvents,
analyze some of their properties and study conditions under which
sequences of $\sigma$-approximate resolvents satisfy properties {\bf
  P1}, {\bf P2}. After that, we recall the definition of the Hybrid
Proximal-Extragradient method and show that $\sigma$-approximate
resolvents are the iteration maps of such method.
At the end of the section we discuss the incorporation of summable
errors to sequences of $\sigma$-approximate resolvents and to the
Hybrid Proximal-Extragradient method.

Recall that the \emph{resolvent} of a maximal monotone operator
$T:X\tos X$ is defined as
\begin{equation}
   \label{eq:def.res}
  J_T(x)=(I+T)^{-1}(x),\qquad x\in X.
\end{equation}
We shall consider approximations of the resolvent in the following
sense.

\begin{definition}
  \label{df:apres}
  The $\sigma$-\emph{approximate resolvent} of a maximal monotone
  operator $T:X\tos X$ is the point-to-set operator
  $J_{T,\sigma}:X\tos X$
  \begin{equation*}
    J_{T,\sigma}(x)=\left\{x-v\;\left|
    \begin{array}{l} \exists \varepsilon \geq 0, y\in X,\\
    v\in T^{[\varepsilon]}(y)\\
    \norm{v+y-x}^2+2\varepsilon\leq
    \sigma^2\norm{y-x}^2
    \end{array}
    \right\}\right.
  \end{equation*}
  where $\sigma\geq 0$.
\end{definition}

First, we analyze some elementary properties of approximate resolvents
and find a convenient expression for $J_{\lambda T,\sigma}$.  In
particular, we show that the $\sigma$-approximate resolvent is
indeed and extension (in the sense of point-to-set maps) of the
classical resolvent.

\begin{proposition}
  \label{pr:apr.bas}
  Let $T:X\tos X$ be maximal monotone. Then, for any $x\in X$,
  \begin{enumerate}
  \item 
    $J_{T,\sigma=0}(x)=\{J_T(x)\}$;
  \item
    \label{it:apr.mon}
    if $0\leq \sigma_1\leq \sigma_2$ then $J_{T,\sigma_1}(x) \subset
    J_{T,\sigma_2}(x)$;
  \item 
    \label{it:apr.lambda}
    for any $\lambda>0$ and $\sigma\geq 0$,
    \begin{align*}
      J_{\lambda T,\sigma}(x)&=\left\{x-\lambda v\;\left|
          \begin{array}{l} 
            \exists \varepsilon \geq 0, y\in X,\\
            v\in T^{[\varepsilon]}(y)\\
            \norm{\lambda v+y-x}^2+2\lambda \varepsilon\leq
            \sigma^2\norm{y-x}^2
          \end{array}
        \right\}\right.
    \end{align*}
  \end{enumerate}
\end{proposition}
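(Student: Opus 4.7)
The plan is to dispatch the three items in order, each by unpacking Definition~\ref{df:apres} and invoking the relevant basic property of the $\varepsilon$-enlargement from Proposition~\ref{pr:teps.bas}. Nothing deep is at play; the proposition is essentially a sanity check on the definition, and the main task is bookkeeping with the parameters $(\varepsilon,y,v)$.

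For item~1, I would observe that when $\sigma=0$ the controlling inequality $\norm{v+y-x}^2+2\varepsilon\le 0$ together with $\varepsilon\ge 0$ forces both $\varepsilon=0$ and $v=x-y$. Using item~\ref{it:tz.t} of Proposition~\ref{pr:teps.bas}, the inclusion $v\in T^{[0]}(y)=T(y)$ rewrites as $x-y\in T(y)$, i.e.\ $x\in(I+T)(y)$, so $y=J_T(x)$. The corresponding element of $J_{T,0}(x)$ is then $x-v=y=J_T(x)$, and conversely $y=J_T(x)$ with $v=x-y$, $\varepsilon=0$ exhibits $J_T(x)\in J_{T,0}(x)$.

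For item~2, I would just note that if $z\in J_{T,\sigma_1}(x)$ with witnesses $(\varepsilon,y,v)$, then since $\sigma_1\le\sigma_2$,
\[
\norm{v+y-x}^2+2\varepsilon\le \sigma_1^2\norm{y-x}^2\le \sigma_2^2\norm{y-x}^2,
\]
so the same witnesses show $z\in J_{T,\sigma_2}(x)$.

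Item~3 is a change of variables via the scaling identity $(\lambda T)^{[\lambda\varepsilon]}(y)=\lambda\left(T^{[\varepsilon]}(y)\right)$ from item~\ref{it:lte} of Proposition~\ref{pr:teps.bas}. Writing the defining conditions of $J_{\lambda T,\sigma}(x)$ with an element $v'\in(\lambda T)^{[\varepsilon']}(y)$ and error $\varepsilon'\ge 0$, I would set $v'=\lambda v$ and $\varepsilon'=\lambda\varepsilon$ (which is a bijection between $(\varepsilon',v')$ with $v'\in(\lambda T)^{[\varepsilon']}(y)$ and $(\varepsilon,v)$ with $v\in T^{[\varepsilon]}(y)$, for $\lambda>0$). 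Substituting, the inequality $\norm{v'+y-x}^2+2\varepsilon'\le\sigma^2\norm{y-x}^2$ becomes $\norm{\lambda v+y-x}^2+2\lambda\varepsilon\le\sigma^2\norm{y-x}^2$, and the output point $x-v'=x-\lambda v$, yielding exactly the claimed description.

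There is no real obstacle here; the only thing to be careful about is the direction of the equivalence in item~3, namely that the parametrization $(\varepsilon',v')\leftrightarrow(\lambda\varepsilon,\lambda v)$ is genuinely a bijection when $\lambda>0$, which is immediate from the scaling identity being an equality of sets rather than a one-sided inclusion.
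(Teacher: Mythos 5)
Your proof is correct and follows exactly the route the paper takes: the paper simply states that all three items follow from Definition~\ref{df:apres} together with items~\ref{it:tz.t}, \ref{it:te.t} and~\ref{it:lte} of Proposition~\ref{pr:teps.bas}, and your write-up supplies the straightforward bookkeeping the paper leaves to the reader.
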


\begin{proof}
  Items 1, 2 and 3 follow trivially from Definition~\ref{df:apres} and
  Proposition~\ref{pr:teps.bas}, items~\ref{it:tz.t}, \ref{it:te.t}
  and~\ref{it:lte}.
\end{proof}

Note that in view of item 1 of the above proposition, if point-to-set
operators which are point-to-point are identified with functions, we
have
\[
J_{T,0}=J_T\,.
\]
In view of item 3,
\[
J_{\lambda T,\sigma}=\left\{z\in X;\left|
          \begin{array}{l} 
            \exists \varepsilon \geq 0, y\in X,\\
            \displaystyle \frac{x-z}{\lambda}\in  T^{[\varepsilon]}(y)\\
            \norm{y-z}^2+2\lambda \varepsilon\leq
            \sigma^2\norm{y-x}^2
          \end{array}
        \right\}\right.
\]

The next theorem is the main result of this section and states that, in
some sense, approximate resolvents are ``almost as good'' as
resolvents for finding zeros of maximal monotone operators, that is,
for solving problem~\eqref{eq:p1} with $\Omega=\{x\:|\; 0\in T(x)\}=T^{-1}(0)$.

\begin{theorem}
  \label{th:hpe}
  Suppose that $T:X\tos X$ is maximal monotone, $\sigma\in [0,1)$,
  $\underline\lambda>0$ and $(\lambda_k)$ is a sequence in
  $[\underline\lambda, \infty)$. Then, the sequence of point-to-set
  maps
  \[
  \left(J_{\lambda_kT,\sigma}\right)_{k\in\mathbb{N}}
  \]
  satisfies properties \textbf{P1}, \textbf{P2} with respect to
  $\Omega=\{x\in X\;|\; 0\in T(x)\}=T^{-1}(0)$.
\end{theorem}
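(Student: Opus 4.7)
The plan is to reduce both properties to the inequalities proved in Lemma~\ref{lm:app.res}, via the unpacking of $J_{\lambda_k T,\sigma}$ given by item~\ref{it:apr.lambda} of Proposition~\ref{pr:apr.bas}. Concretely, every $\hat x\in J_{\lambda_k T,\sigma}(x)$ arises from a triple $(y,v,\varepsilon)$ with $v\in T^{[\varepsilon]}(y)$, $\hat x=x-\lambda_k v$ and $\|\lambda_k v+y-x\|^2+2\lambda_k\varepsilon\le\sigma^2\|y-x\|^2$, which is exactly the hypothesis of Lemma~\ref{lm:app.res}. So Lemma~\ref{lm:app.res} applies with $z=\hat x$ and $\lambda=\lambda_k$ throughout.

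For \textbf{P1}, fix $x^*\in T^{-1}(0)$ and $\hat x\in J_{\lambda_k T,\sigma}(x)$. The last inequality of Lemma~\ref{lm:app.res} gives
\[
\|x^*-x\|^2\ \ge\ \|x^*-\hat x\|^2+(1-\sigma^2)\|y-x\|^2,
\]
and since $\sigma\in[0,1)$ the second term is non-negative, yielding $\|x^*-\hat x\|\le\|x^*-x\|$.

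For \textbf{P2}, let $(z_k)$, $(\hat z_k)$, $(n_k)$, $\bar z$, $x^*$ be as in the hypothesis and pick the associated triples $(y_k,v_k,\varepsilon_k)$ from the definition of $J_{\lambda_{n_k} T,\sigma}$. Lemma~\ref{lm:app.res} now reads
\[
\|x^*-z_k\|^2-\|x^*-\hat z_k\|^2\ \ge\ (1-\sigma^2)\|y_k-z_k\|^2.
\]
Weak convergence of $(z_k)$ makes it bounded; by \textbf{P1} so is $(\hat z_k)$. Factoring the left side as $(\|x^*-z_k\|-\|x^*-\hat z_k\|)(\|x^*-z_k\|+\|x^*-\hat z_k\|)$ and using the hypothesis, the left side tends to $0$, so $\|y_k-z_k\|\to 0$; hence $y_k\rightharpoonup\bar z$. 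The first two inequalities of Lemma~\ref{lm:app.res} give $\|\lambda_{n_k}v_k\|\le(1+\sigma)\|y_k-z_k\|$, and using $\lambda_{n_k}\ge\underline\lambda>0$ we conclude $v_k\to 0$ strongly; similarly $2\lambda_{n_k}\varepsilon_k\le\sigma^2\|y_k-z_k\|^2$ combined with $\lambda_{n_k}\ge\underline\lambda$ yields $\varepsilon_k\to 0$. Since $v_k\in T^{[\varepsilon_k]}(y_k)$, the demi-closedness property (Proposition~\ref{pr:teps.bas}, item~\ref{it:demi}) delivers $0\in T^{[0]}(\bar z)=T(\bar z)$, so $\bar z\in\Omega$.

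The only step that is more than routine is the argument that $\|y_k-z_k\|\to 0$: the hypothesis gives only that the \emph{difference of norms} vanishes, not their squares, and one must invoke boundedness of $(z_k)$ and of $(\hat z_k)$ (the latter via \textbf{P1}) to convert this into a squared estimate. The uniform lower bound $\underline\lambda$ on $\lambda_{n_k}$ is then what promotes $\lambda_{n_k}v_k\to 0$ and $\lambda_{n_k}\varepsilon_k\to 0$ into $v_k\to 0$ and $\varepsilon_k\to 0$, allowing the demi-closedness lemma to close the argument.
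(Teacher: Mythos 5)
Your proposal is correct and follows essentially the same route as the paper's own proof: both properties are reduced to Lemma~\ref{lm:app.res} via the representation of $J_{\lambda_k T,\sigma}$ in Proposition~\ref{pr:apr.bas}, with the same difference-of-squares factorization, the same use of the lower bound $\underline\lambda$, and the same appeal to demi-closedness of the enlargement. Your explicit remark that boundedness of $(\hat z_k)$ comes from \textbf{P1} is a small clarification the paper leaves implicit, but the argument is identical in substance.
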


\begin{proof}
  Suppose that $\hat x\in J_{\lambda_kT,\sigma}(x)$. This means
  that there exists $y,v\in X$, $\varepsilon\geq0$ such that
  \[
  \hat x=x-\lambda_kv,\quad v\in T^{[\varepsilon]}(x),\quad
  \norm{\lambda_kv+y-x}^2+2\lambda\varepsilon\leq \sigma^2\norm{y-x}^2.
  \]
  Therefore, using Lemma~\ref{lm:app.res} we conclude that for any 
  $x^*\in =T^{-1}(0)$,
  \[
  \norm{x^*-x}^2\geq \norm{x^*-\hat x}^2+(1-\sigma^2)\norm{y-x}^2
   \geq \norm{x^*-\hat x}^2
  \]
  which proves that the family $(J_{\lambda_kT,\sigma})$ satisfies \textbf{P1}.

  Now we prove {\bf P2}. Suppose that $(z_k)$ converges weakly to
  $\bar z$, $\hat z_k\in J_{\lambda_{n_k}T,\sigma}(z_k)$, $0\in
  T(x^*)$ and
  \begin{equation}
    \label{eq:toz}
  \lim_{k\to\infty}\norm{x^*-z_k}-\norm{x^*-\hat z_k}=0.
  \end{equation}
  To simplify the proof, let $\mu_k=\lambda_{n_k}\geq \underline\lambda$.
  For each $k$ there exists $v_k,y_k\in X$, $\varepsilon_k\geq 0$ such that
  \begin{equation}
    \label{eq:aux.2}
  \hat z_k=z_k-\mu_kv,\quad v_k\in T^{\varepsilon_k}(z_k),\quad
  \norm{\mu_kv_k+y_k-z_k}^2+2\mu_k\varepsilon\leq \sigma^2\norm{y_k-z_k}^2.
  \end{equation}
  Using again Lemma~\ref{lm:app.res}, we conclude that
  \[
  \norm{x^*-z_k}^2\geq \norm{x^*-\hat z_k}^2+(1-\sigma^2)\norm{y_k-z_k}^2.
  \]
  Therefore
  \begin{align*}
    (1-\sigma^2)\norm{y_k-z_k}^2&\leq \norm{x^*-z_k}^2-\norm{x^*-\hat z_k}^2\\
    &=(\norm{x^*-z_k}-\norm{x^*-\hat z_k})
    (\norm{x^*-z_k}+ \norm{x^*-\hat z_k}).
  \end{align*}
  Since $(z^k)$ is weakly convergent, it is also bounded. Taking this
  fact into account and using the above equation and \eqref{eq:toz}
  we conclude that 
  \[
  \lim_{k\to\infty}\norm{y_k-z_k}=0.
  \]
  So, $(y_k)$ also converges weakly to $\bar z$. Since $\varepsilon_k\geq 0$,
  using the last relation in~\eqref{eq:aux.2} we conclude that
  \[
  \mu_k\varepsilon_k\leq \frac{\sigma^2}{2}\norm{y_k-z_k}^2, \qquad
  \norm{\mu_kv_k}\leq (1+\sigma)\norm{y_k-z_k}.
\]
  Therefore, since $(\mu_k)$ is bounded away from $0$,
  \[ \lim_{k\to\infty}\varepsilon_k=0, \quad 
    \lim_{k\to\infty}v_k=0
\]
and $0\in T^{[0]}(\bar z)=T(\bar z)$.
\end{proof}

The Hybrid Proximal-Extragradient/Projection methods were introduced
in \cite{MR1713951,MR1756912,MR1871872}.  These methods are variants
of the Proximal Point method which use relative error tolerances for
accepting inexact solutions of the proximal sub-problems.  Here we are
concerned with the variant introduced in~\cite{MR1756912}, which will
be called, from now on, the Hybrid Proximal-Extragradient (HPE)
method.  It solves iteratively the problem
\begin{equation}
  \label{eq:ip}
  0\in T(x),  
\end{equation}
where $T:X\tos X$ is maximal monotone. This method proceeds as follows.
\\
\\{\sc Algorithm: (projection free) HPE method}~\cite{MR1756912}:\\
Choose $x_0\in X$, $\sigma\in[0,1)$, $\underline\lambda>0$ and for
$k=1,2,\dots$\\
a) Choose $\lambda_k\geq\underline\lambda$ and find/compute
$v_k,y_k\in X$, $\varepsilon\geq 0$ such that
\begin{align*}
  v_k\in T^{\varepsilon_k}( y_k),\qquad 
  \norm{\lambda_k v_k+y_k-x_{k-1}}^2+2\lambda_k\varepsilon_k\leq
\sigma^2\norm{y_k-x_{k-1}}^2 
\end{align*}
b) Set $x_k=x_{k-1}-\lambda_kv_k$
\\
\\
To generate iteratively sequences by means of approximate resolvents
is equivalent to apply the HPE method in the following sense.

\begin{proposition}
  \label{pr:hy.appj}
  Let $T:X\tos X$ be maximal monotone, $\sigma\geq 0$,
  $\underline{\lambda}>0$ and $(\lambda_k)$ be sequence in
  $[\underline{\lambda},\infty)$.

  A sequence $(x_k)$ satisfies the recurrent inclusion
  \begin{equation*}
    x_k\in J_{\lambda_k T,\sigma}(x_{k-1}),\qquad k=1,2,\dots    
  \end{equation*}
  if and only if there exists sequences $(y_k)$, $(v_k)$,
  $(\varepsilon_k)$ which, together with the sequences $(x_k)$,
  $(\lambda_k)$ satisfy steps a) and b) of the HPE method.
\end{proposition}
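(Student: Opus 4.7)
The plan is to observe that this proposition is essentially a restatement of the definition once the convenient form of $J_{\lambda T,\sigma}$ given in Proposition~\ref{pr:apr.bas}(\ref{it:apr.lambda}) is invoked, so the proof reduces to matching clauses on the two sides of the equivalence.

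For the forward implication, I would fix $k$ and assume $x_k\in J_{\lambda_k T,\sigma}(x_{k-1})$. Applying item~\ref{it:apr.lambda} of Proposition~\ref{pr:apr.bas} with $x=x_{k-1}$ yields the existence of $\varepsilon_k\geq 0$, $y_k\in X$, and $v_k\in T^{[\varepsilon_k]}(y_k)$ such that
\[
\norm{\lambda_k v_k+y_k-x_{k-1}}^2+2\lambda_k\varepsilon_k\leq \sigma^2\norm{y_k-x_{k-1}}^2
\qquad\text{and}\qquad x_k=x_{k-1}-\lambda_k v_k.
\]
The first condition is exactly step~a) of HPE for the $k$-th iteration, and the second is step~b). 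Doing this for every $k\in\mathbb{N}$ produces the required sequences $(y_k)$, $(v_k)$, $(\varepsilon_k)$.

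For the reverse implication, I would take sequences $(y_k)$, $(v_k)$, $(\varepsilon_k)$ satisfying steps~a) and~b) of HPE together with $(x_k)$ and $(\lambda_k)$. Step~b) gives $x_k=x_{k-1}-\lambda_k v_k$, which exhibits $x_k$ in the form $x-\lambda v$ required in Proposition~\ref{pr:apr.bas}(\ref{it:apr.lambda}) with $x=x_{k-1}$, $\lambda=\lambda_k$, $v=v_k$; and step~a) provides exactly the certificate $\varepsilon_k\geq0$, $y_k\in X$, $v_k\in T^{[\varepsilon_k]}(y_k)$ with $\norm{\lambda_k v_k+y_k-x_{k-1}}^2+2\lambda_k\varepsilon_k\leq\sigma^2\norm{y_k-x_{k-1}}^2$ needed to place $x_k$ in $J_{\lambda_k T,\sigma}(x_{k-1})$.

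There is no real obstacle: the content of the proposition is purely notational, since the definition of $\sigma$-approximate resolvent was engineered precisely to package one HPE iteration into a single point-to-set inclusion. The only thing worth remarking is that the index $\varepsilon_k$ and the intermediate point $y_k$ are hidden inside the image of $J_{\lambda_k T,\sigma}$, so passing from the recursive inclusion to the HPE sequences requires a (possibly non-canonical) choice of witnesses, which is unproblematic in the set-valued setting. I would therefore write the argument as two short paragraphs, one per direction, each citing Proposition~\ref{pr:apr.bas}(\ref{it:apr.lambda}) to eliminate any computation.
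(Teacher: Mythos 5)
Your proposal is correct and takes essentially the same approach as the paper, whose proof is the one-line instruction to use Definition~\ref{df:apres} and Proposition~\ref{pr:apr.bas} item~\ref{it:apr.lambda}; you have simply written out the clause-by-clause matching that this instruction implies.
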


\begin{proof}
  Use Definition~\ref{df:apres} and Proposition~\ref{pr:apr.bas}
  item~\ref{it:apr.lambda}.
\end{proof}

Convergence of the HPE method perturbed by a summable sequence of
errors was proved directly in~\cite{MR1850681}.
Here we see that it can be easily and effortlessly derived as a
particular case of a generic convergence result,
combining Proposition~\ref{pr:hy.appj}
with Theorem~\ref{th:hpe}.

\begin{corollary}
  \label{cr:hy.se}
  If $T:X\tos X$ is maximal monotone, $T^{-1}(0)\neq\emptyset$,
  $\underline\lambda>0$, $\sigma\in[0,1)$, for $k=1,2,\dots$
  \begin{align*}
&\lambda_k\geq \underline\lambda\\
& v_k\in T^{[\varepsilon_k]}(\tilde x_k),\;
\norm{\lambda_kv_k+\tilde x_k-x_{k-1}}^2+2\lambda_k\varepsilon_k\leq
\sigma\norm{\tilde x_k-x_{k-1}}^2\\
&x_k=x_{k-1}-\lambda_kv_k+r_k
  \end{align*}
and $\sum\norm{r_k}<\infty$, then $(x_k)$ (and $(\tilde x_k)$) converges
weakly to a point $\bar x\in T^{-1}(0)$.
\end{corollary}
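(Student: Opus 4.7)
The plan is to recognize the iteration in the corollary as a perturbation of a $\sigma$-approximate resolvent recursion, and then simply feed that recursion into Theorem~\ref{th:main}. All the hard work has already been done in Proposition~\ref{pr:hy.appj} and Theorem~\ref{th:hpe}; the corollary is essentially their concatenation, with a small extra argument needed for the auxiliary sequence $(\tilde x_k)$.

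First I would set $\hat x_k := x_k - r_k = x_{k-1} - \lambda_k v_k$. The data $(\tilde x_k, v_k, \varepsilon_k, \lambda_k, x_{k-1})$ together with the output $\hat x_k$ are exactly one step of the HPE method: conditions a) and b) hold by hypothesis (possibly reading the error bound as $\sigma^2$, consistently with Definition~\ref{df:apres}). Proposition~\ref{pr:hy.appj} then gives $\hat x_k \in J_{\lambda_k T, \sigma}(x_{k-1})$, so the iteration rewrites as
\[
x_k \in J_{\lambda_k T, \sigma}(x_{k-1}) + r_k, \qquad \sum \norm{r_k} < \infty.
\]

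Second, since $\sigma \in [0,1)$ and $\lambda_k \geq \underline\lambda > 0$, Theorem~\ref{th:hpe} tells us that the family $(J_{\lambda_k T, \sigma})$ satisfies \textbf{P1} and \textbf{P2} with respect to $\Omega = T^{-1}(0)$. Because $T^{-1}(0) \neq \emptyset$, an immediate application of Theorem~\ref{th:main} yields that $(x_k)$ is Quasi-Fej\'er convergent to $T^{-1}(0)$ and converges weakly to some $\bar x \in T^{-1}(0)$, with $\lim_k \norm{x^* - x_k}$ existing for every $x^* \in T^{-1}(0)$.

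Third, I would promote the weak convergence from $(x_k)$ to $(\tilde x_k)$. Lemma~\ref{lm:app.res} applied to $(\tilde x_k, v_k, \varepsilon_k, \lambda_k)$ gives, for any $x^* \in T^{-1}(0)$,
\[
\norm{x^* - x_{k-1}}^2 \geq \norm{x^* - \hat x_k}^2 + (1-\sigma^2)\norm{\tilde x_k - x_{k-1}}^2.
\]
Since $\norm{x^* - x_k} \leq \norm{x^* - \hat x_k} + \norm{r_k}$ and $\norm{r_k} \to 0$, while $\lim_k \norm{x^* - x_k}$ exists by step two, the difference $\norm{x^* - x_{k-1}}^2 - \norm{x^* - \hat x_k}^2$ tends to $0$, and the boundedness of $(x_k)$ then forces $\norm{\tilde x_k - x_{k-1}} \to 0$. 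Combined with $x_{k-1} \rightharpoonup \bar x$ this gives $\tilde x_k \rightharpoonup \bar x$.

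The main obstacle, if any, is really just this last bookkeeping step for $(\tilde x_k)$; the rest is a transparent composition of Proposition~\ref{pr:hy.appj}, Theorem~\ref{th:hpe}, and Theorem~\ref{th:main}, which is precisely the point the author wants to make about the unifying power of the Fej\'er-convergent framework.
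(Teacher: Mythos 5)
Your proof is correct and follows exactly the route the paper intends: the paper gives no explicit proof of this corollary, stating only that it follows by combining Proposition~\ref{pr:hy.appj} (equivalently, Proposition~\ref{pr:apr.bas}, item 3) with Theorem~\ref{th:hpe} and Theorem~\ref{th:main}, which is precisely your first two steps. Your third step, establishing weak convergence of $(\tilde x_k)$ via $\norm{\tilde x_k - x_{k-1}}\to 0$ from Lemma~\ref{lm:app.res}, correctly supplies a detail the paper leaves implicit.
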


\begin{corollary}
  \label{cr:hy.se2}
  If $T:X\tos X$ is maximal monotone, $T^{-1}(0)\neq\emptyset$,
  $\overline{\lambda}\geq \underline\lambda>0$, $\sigma\in[0,1)$, for
  $k=1,2,\dots$
  \begin{align*}
&\overline{\lambda}\geq \lambda_k\geq \underline\lambda\\
& v_k\in T^{[\varepsilon_k]}(\tilde x_k),\;
\norm{\lambda_kv_k+\tilde x_k-x_{k-1}}^2+2\lambda_k\varepsilon_k\leq
\sigma\norm{\tilde x_k-x_{k-1}}^2\\
&x_k=x_{k-1}-\lambda_k(v_k+r_k)
  \end{align*}
and $\sum\norm{r_k}<\infty$, then $(x_k)$ (and $(\tilde x_k)$) converges
weakly to a point $\bar x\in T^{-1}(0)$.
\end{corollary}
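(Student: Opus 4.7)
The plan is to recognize Corollary~\ref{cr:hy.se2} as a direct specialization of Corollary~\ref{cr:hy.se} made possible by the additional upper bound hypothesis $\lambda_k \leq \overline{\lambda}$. Structurally the only difference between the two statements is that here the error enters the recursion as $-\lambda_k r_k$ rather than as a free summand $r_k$. Setting $r'_k := -\lambda_k r_k$, the uniform bound gives $\|r'_k\| \leq \overline{\lambda}\,\|r_k\|$, so $\sum\|r'_k\| < \infty$. Rewriting
\[
x_k = x_{k-1} - \lambda_k v_k + r'_k
\]
places the iteration exactly inside the hypotheses of Corollary~\ref{cr:hy.se}, which then yields weak convergence of both $(x_k)$ and $(\tilde x_k)$ to a point in $T^{-1}(0)$.

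If one prefers to argue from scratch rather than quoting the previous corollary, the same conclusion follows by chaining together the tools built earlier: Proposition~\ref{pr:hy.appj} identifies $x_{k-1} - \lambda_k v_k$ as an element of $J_{\lambda_k T,\sigma}(x_{k-1})$; Theorem~\ref{th:hpe} shows that the family $\left(J_{\lambda_k T,\sigma}\right)$ satisfies \textbf{P1} and \textbf{P2} with respect to $T^{-1}(0)$; and Theorem~\ref{th:main} applied to the perturbed recursion with error $r'_k$ delivers the weak convergence of $(x_k)$ to some $\bar x \in T^{-1}(0)$.

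Weak convergence of $(\tilde x_k)$ is not entirely automatic but requires one extra observation, already implicit in the proof of Theorem~\ref{th:hpe}. Lemma~\ref{lm:app.res} supplies the estimate
\[
(1-\sigma^2)\|\tilde x_k - x_{k-1}\|^2 \leq \|x^* - x_{k-1}\|^2 - \|x^* - (x_{k-1}-\lambda_k v_k)\|^2.
\]
Since $x_{k-1}-\lambda_k v_k = x_k + \lambda_k r_k$ with $\|\lambda_k r_k\| \leq \overline{\lambda}\,\|r_k\| \to 0$, both terms on the right share the common limit $\lim_n \|x^* - x_n\|$ provided by Quasi-Fej\'er convergence. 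Consequently $\|\tilde x_k - x_{k-1}\| \to 0$ strongly, which transfers the weak limit from $(x_{k-1})$ to $(\tilde x_k)$.

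The only place the argument could break is at the reduction step: without the upper bound $\lambda_k \leq \overline{\lambda}$, the rescaled error $-\lambda_k r_k$ need not be summable, and the iteration no longer fits the framework of Theorem~\ref{th:main}. The hypothesis is therefore used exactly once, but essentially, and that is the whole content distinguishing this corollary from Corollary~\ref{cr:hy.se}.
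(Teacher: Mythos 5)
Your proposal is correct and follows essentially the route the paper intends: the paper gives no explicit proof, presenting both corollaries as immediate consequences of combining Proposition~\ref{pr:hy.appj} with Theorem~\ref{th:hpe} (via Theorem~\ref{th:main}), and your reduction via $r'_k=-\lambda_k r_k$, summable precisely because $\lambda_k\leq\overline{\lambda}$, is exactly that specialization of Corollary~\ref{cr:hy.se}. Your additional argument for the weak convergence of $(\tilde x_k)$, showing $\|\tilde x_k-x_{k-1}\|\to 0$ from Lemma~\ref{lm:app.res} and the existence of $\lim_n\|x^*-x_n\|$, correctly supplies a detail the paper leaves implicit.
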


\section{The Forward-Backward splitting method}
\label{sec:fv-bw}

We will prove in this section that the iteration maps of the
Forward-Backward splitting method are specializations or selections of
$\sigma$-approximate resolvents and the sequence of
iteration maps satisfies properties {\bf P1}, {\bf P2}.
Equivalently, the Forward-Backward splitting method is a particular
instance  of the HPE method.
Observe that, as a consequence, sequences generated by the inexact
Forward-Backward splitting methods with summable errors still converge
weakly to solutions of the inclusion problem, if any.
This convergence result was previously obtained in \cite{MR2115266} by
a detailed analysis of the Forward-Backward splitting method. Here we
see that it can be easily and effortlessly derived as a particular
case of a generic convergence result.

The Forward-Backward Splitting method solves the inclusion problem
\[
0\in (A+B)x
\]
where
\\
\\
{\bf f1)} ${A}:X\to X$ is $\alpha$-cocoercive, $\alpha>0$;
\\
{\bf f2)} ${B}:X\tos X$ is maximal monotone.
\\
\\
This method proceeds as follows:
\\
\\
{\sc Forward-Backward Splitting method}\\
0) Initialization: Choose $0<\underline\lambda\leq\bar\lambda< 2 \alpha$
and $x_0\in X$;
\\
1) for $k=1,2,\dots$\\
a) choose $\lambda_k\in [\underline\lambda,\bar\lambda]$ and define
\begin{align}
  \nonumber
  x_k&=(I+\lambda_k{B})^{-1}(x_k-\lambda_k{A}(x_{k-1}))
  \\&
  \label{eq:fb.im}
  =J_{\lambda_k{B}}\circ (I-\lambda_k{A})\;\,( x_{k-1}).
\end{align}

Note that the generic iteration map of the Forward-Backward method is
\begin{equation}
  \label{eq:fb.gim}
  J_{\lambda {B}}\circ (I-\lambda {A})
\end{equation}
with $\lambda=\lambda_k$ in the $k$-th iteration.

\begin{lemma}
  \label{lm:fb}
  If $A,B$ satisfy {\bf f1} and  {\bf f2}
  then, for any $\lambda>0$ and $x\in X$,
  \[
  J_{\lambda {B}}\circ(I-\lambda {A})(x)\in J_{\lambda(A+B),\sigma}
  (x),
  \]
  with   $\sigma=\sqrt{\lambda/(2\alpha)}$.
\end{lemma}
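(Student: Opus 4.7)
The plan is to exhibit the point $z := J_{\lambda B}\circ(I-\lambda A)(x)$ as an element of $J_{\lambda(A+B),\sigma}(x)$ by producing explicit witnesses $y$, $v$, $\varepsilon$ satisfying the conditions in item~\ref{it:apr.lambda} of Proposition~\ref{pr:apr.bas} (applied to the operator $A+B$).

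First, I would unpack the definition of $z$. By the definition of the resolvent of $\lambda B$, the point $z$ satisfies $x-\lambda A(x)-z\in\lambda B(z)$, i.e.\ there exists $w\in B(z)$ with
\[
\lambda w = x - z - \lambda A(x).
\]
Now I propose the witnesses $y := z$, $\varepsilon := \|x-z\|^2/(4\alpha)$, and $v := A(x) + w$. Then $\lambda v = \lambda A(x) + \lambda w = x - z$, so $x - \lambda v = z$, matching the form required by Proposition~\ref{pr:apr.bas}.

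Next, I need to verify the enlargement membership $v \in (A+B)^{[\varepsilon]}(y)$. This is where the transportation formula for cocoercive operators (Lemma~\ref{lm:tr.cc}) does the heavy lifting: applied with the cocoercive operator $A$ and the two points $x$ and $y=z$, it yields $A(x)\in A^{[\varepsilon]}(z)$ with exactly $\varepsilon=\|x-z\|^2/(4\alpha)$. Combined with $w\in B(z) = B^{[0]}(z)$, Proposition~\ref{pr:teps.sum} then gives
\[
v = A(x) + w \in (A+B)^{[\varepsilon]}(z) = (A+B)^{[\varepsilon]}(y),
\]
assuming (as usual under \textbf{f1}, \textbf{f2}) that $A+B$ is maximal monotone.

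Finally, I would verify the inequality $\|\lambda v + y - x\|^2 + 2\lambda\varepsilon \le \sigma^2\|y-x\|^2$. The remarkable point, and the reason the bound matches the stated $\sigma$ exactly, is that with these choices $\lambda v + y - x = (x-z)+z-x = 0$, so the first term vanishes identically. What remains is
\[
2\lambda\varepsilon = \frac{\lambda}{2\alpha}\|x-z\|^2 = \sigma^2\|y-x\|^2,
\]
an equality when $\sigma^2 = \lambda/(2\alpha)$. There is no real obstacle in this argument; the only subtle step is recognizing that the transportation lemma is precisely calibrated so that the error budget $2\lambda\varepsilon$ in the HPE-style criterion absorbs the discrepancy between $A(x)$ (used in the forward step) and $A(y)$ (the value needed at the implicit point $y$).
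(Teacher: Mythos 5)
Your proof is correct and follows essentially the same route as the paper's: the same witnesses $y=z$, $v=A(x)+b$, $\varepsilon=\norm{x-z}^2/(4\alpha)$, the transportation formula of Lemma~\ref{lm:tr.cc} for the membership $A(x)\in A^{[\varepsilon]}(z)$, the sum rule of Proposition~\ref{pr:teps.sum}, and the observation that $\lambda v+y-x=0$ so the error criterion holds with equality. Your explicit remark that $A+B$ must be maximal monotone for the sum rule to apply is a point the paper leaves implicit (it holds since $A$ is cocoercive, hence continuous and everywhere defined).
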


\begin{proof}
  Take $x\in X$ and let $z=J_{\lambda {B}}\circ(I-\lambda {A})(x)$. This
  means that
  \[
  b:=\lambda^{-1}(x-\lambda {A}(x)-z)\in {B}(z).
  \]
  Define $\varepsilon=\norm{x-z}^2/(4\alpha)$, $v=A(x)+b$.  Using
  Lemma~\ref{lm:tr.cc} we conclude that ${A}(x)\in
  {A}^{[\varepsilon]}(z)$. Therefore, combining this result with these
  two definitions, the above equation, Proposition~\ref{pr:teps.sum}
  and Proposition~\ref{pr:teps.bas} item~\ref{it:tz.t}, we conclude
  that
  \begin{align*}
    & v\in (A^{[\varepsilon]}+b)(z)\subset (A+B)^{[\varepsilon]}(z),
    \quad\norm{\lambda v+z-x}^2+2\lambda\varepsilon=\sigma^2\norm{z-x}^2\\
    & z=x-\lambda v,
  \end{align*}
 which, together with Proposition~\ref{pr:apr.bas} item 3, proves the lemma.  
\end{proof}

\begin{corollary}
  \label{cr:fb}
  Let $A,B$ be as in {\bf f1}, {\bf f2} and $\underline\lambda$,
  $\bar\lambda$ and $(\lambda_k)$, $(x_k)$ be as in the Forward
  Backward method.
  Define
  \[
  \sigma=\sqrt{\bar \lambda/(2\alpha)}.
  \]
  Then $0<\sigma<1$ and for any $x\in X$ 
  \begin{equation}
    \label{eq:fb.im.1}
    J_{\lambda_k{B}}\circ (I-\lambda_k{A})(x)\in J_{\lambda_k(A+B),\sigma\,}(x)
  ,\qquad k=1,2,\dots
  \end{equation}
  In particular
  \begin{equation}
    \label{eq:fb.im.2}
    x_k= J_{\lambda_k{B}}\circ (I-\lambda_k{A})(x_k)\in
    J_{\lambda_k(A+B),\sigma\,}(x_k)
    ,\qquad k=1,2,\dots
  \end{equation}
  and the sequence of maps $(J_{\lambda_k{B}}\circ(I-\lambda_k{A}))$
  satisfies properties {\bf P1}, {\bf P2} with respect to
  $(A+B)^{-1}(0)$.
\end{corollary}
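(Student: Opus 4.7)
The plan is to assemble the result by chaining together Lemma~\ref{lm:fb}, the monotonicity of approximate resolvents in the tolerance parameter, Theorem~\ref{th:hpe}, and the inheritance Proposition~\ref{pr:sp}.

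First, the bounds $0 < \underline\lambda \leq \bar\lambda < 2\alpha$ immediately yield $0 < \sigma = \sqrt{\bar\lambda/(2\alpha)} < 1$. Next, for arbitrary $k$ and any $x \in X$, I would apply Lemma~\ref{lm:fb} with $\lambda = \lambda_k$ to obtain
\[
J_{\lambda_k B}\circ(I-\lambda_k A)(x) \in J_{\lambda_k(A+B),\,\sigma_k}(x),
\qquad \sigma_k := \sqrt{\lambda_k/(2\alpha)}.
\]
Since $\lambda_k \leq \bar\lambda$ we have $\sigma_k \leq \sigma$, and then Proposition~\ref{pr:apr.bas}(\ref{it:apr.mon}) gives $J_{\lambda_k(A+B),\sigma_k}(x) \subset J_{\lambda_k(A+B),\sigma}(x)$. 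This proves~\eqref{eq:fb.im.1}, and~\eqref{eq:fb.im.2} is the instance $x = x_{k-1}$ combined with the definition~\eqref{eq:fb.im} of $x_k$.

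For the final assertion about properties \textbf{P1}, \textbf{P2}, I would argue as follows. The cocoercive operator $A$ is (single-valued) monotone with full domain $X$, so by the standard sum theorem $A+B$ is maximal monotone; this is the only non-routine verification in the proof, and it is a classical fact from the theory of maximal monotone operators. Having this, Theorem~\ref{th:hpe} applied to $T = A+B$ with the sequence $(\lambda_k) \subset [\underline\lambda,\infty)$ and tolerance $\sigma \in [0,1)$ shows that the family $\bigl(J_{\lambda_k(A+B),\sigma}\bigr)_{k \in \mathbb{N}}$ satisfies \textbf{P1} and \textbf{P2} with respect to $(A+B)^{-1}(0)$.

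To transfer these properties to the Forward--Backward iteration maps, identify the point-to-point map $J_{\lambda_k B}\circ(I-\lambda_k A)$ with its singleton-valued point-to-set version $G_k$. Then~\eqref{eq:fb.im.1} reads precisely $G_k(x) \subset J_{\lambda_k(A+B),\sigma}(x)$ for every $x$, so Proposition~\ref{pr:sp} delivers \textbf{P1} and \textbf{P2} for $\bigl(J_{\lambda_k B}\circ(I-\lambda_k A)\bigr)_{k \in \mathbb{N}}$ with respect to $(A+B)^{-1}(0)$, concluding the proof. The only subtle point in the whole argument is the use of maximal monotonicity of $A+B$; everything else is a bookkeeping exercise that combines results already established.
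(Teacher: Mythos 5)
Your proof is correct and follows essentially the same route as the paper's: Lemma~\ref{lm:fb} combined with the monotonicity of the approximate resolvent in $\sigma$ (Proposition~\ref{pr:apr.bas}) for the inclusions, then Theorem~\ref{th:hpe} and Proposition~\ref{pr:sp} for properties \textbf{P1}, \textbf{P2}. Your explicit observation that $A+B$ is maximal monotone (needed to invoke Theorem~\ref{th:hpe}) is a worthwhile detail that the paper leaves implicit.
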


\begin{proof}
  The bounds for $\sigma$ follow trivially from its definition and
  the choices for $\underline{\lambda}$, $\bar\lambda$ in the Forward-Backward
  method.

  Define $ \sigma_k=\sqrt{\lambda_k/(2\alpha)}$ for $k=1,2,\dots$
  Since $\lambda_k\in[\underline{\lambda},\bar\lambda]$,
  $0<\sigma_k\leq\sigma$ for all $k$.  Therefore, using also
  Lemma~\ref{lm:fb} and Proposition~\ref{pr:apr.bas}
  item~\ref{it:apr.mon}, we conclude that for any $x\in X$
  \[
  J_{\lambda_k{B}}\circ (I-\lambda_k{A})(x)\in
  J_{\lambda_k(A+B),\sigma_k}(x)\subset J_{\lambda_k(A+B),\sigma}(x),
  \quad k=1,2,\dots
  \]
  The equality in \eqref{eq:fb.im.2} follows trivially from the
  definition of the Forward-Backward method, while the inclusion
  follows from the above equation. To end the proof, note that
  $0<\underline\lambda<\lambda_k$ for all $k$, and use
  Theorem~\ref{th:hpe}, Proposition~\ref{pr:sp} and the above
  equation.
\end{proof}

\begin{proposition}
  \label{pr:fb}
Let  $(\lambda_k)$, $(x_k)$ be sequences generated by the
Forward-Backward Splitting method. Define
\begin{align*}
\sigma=\sqrt{\frac{\overline\lambda}{2\alpha}},\;\;
v_k=\lambda_k^{-1}(x_{k-1}-x_k),\;\;
\varepsilon_k=\frac{\norm{x_k-x_{k-1}}^2}{4\alpha},
\; k=1,2,\dots
\end{align*}
Then $0<\sigma<1$ and for $k=1,2,\dots$
\begin{align*}
  & v^k\in ({B}+{A})^{[\varepsilon_k]}(x_k), \quad 
  \norm{\lambda_kv_k+x_k-x_{k-1}}^2+2\lambda_k\varepsilon_k
  \leq \sigma\norm{x_k-x_{k-1}}^2\\
  &x_k=x_{k-1}-\lambda_kv_k.
\end{align*}
In particular, the Forward-Backward splitting method above defined is
a particular case of the HPE method with $\sigma\in(0,1)$.
\end{proposition}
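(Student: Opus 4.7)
The proposition is essentially a bookkeeping statement: it re-expresses the Forward--Backward step using the HPE witnesses $(y_k,v_k,\varepsilon_k)$, so the plan is to unpack Lemma~\ref{lm:fb} and Corollary~\ref{cr:fb} with $y_k=x_k$.

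First I would verify the bound on $\sigma$. Since $0<\underline\lambda\leq\overline\lambda<2\alpha$ by the initialization of the Forward--Backward method, $\sigma^2=\overline\lambda/(2\alpha)\in(0,1)$, hence $0<\sigma<1$. Next, the equality $x_k=x_{k-1}-\lambda_k v_k$ is immediate from the definition $v_k=\lambda_k^{-1}(x_{k-1}-x_k)$.

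The main content is the membership $v_k\in(A+B)^{[\varepsilon_k]}(x_k)$. I would extract from the resolvent step the element $b_k:=\lambda_k^{-1}(x_{k-1}-\lambda_k A(x_{k-1})-x_k)\in B(x_k)$, which is exactly what $x_k=J_{\lambda_k B}(x_{k-1}-\lambda_k A(x_{k-1}))$ asserts. A direct algebraic check gives $v_k=A(x_{k-1})+b_k$. Now I invoke the cocoercive transportation formula (Lemma~\ref{lm:tr.cc}) with the pair $(x,z)=(x_k,x_{k-1})$ to obtain $A(x_{k-1})\in A^{[\varepsilon_k]}(x_k)$ for the very $\varepsilon_k=\norm{x_k-x_{k-1}}^2/(4\alpha)$ stated in the proposition. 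Combining $A(x_{k-1})\in A^{[\varepsilon_k]}(x_k)$ with $b_k\in B(x_k)=B^{[0]}(x_k)$ via the additive property of enlargements (Proposition~\ref{pr:teps.sum}) yields $v_k\in(A+B)^{[\varepsilon_k]}(x_k)$.

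For the relative-error inequality, the defining identity $\lambda_k v_k=x_{k-1}-x_k$ makes the first term $\norm{\lambda_k v_k+x_k-x_{k-1}}^2$ vanish identically, so it only remains to bound $2\lambda_k\varepsilon_k=\lambda_k\norm{x_k-x_{k-1}}^2/(2\alpha)$; this is at most $(\overline\lambda/(2\alpha))\norm{x_k-x_{k-1}}^2=\sigma^2\norm{x_k-x_{k-1}}^2$, giving the required tolerance (with $\sigma^2$ in place of $\sigma$ in the statement, assuming a minor typo). Finally, with $(y_k,v_k,\varepsilon_k)=(x_k,v_k,\varepsilon_k)$ together with $\lambda_k$ and $T=A+B$, all conditions of steps a)--b) of the HPE algorithm are satisfied, exhibiting Forward--Backward as a particular instance of HPE. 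There is no real obstacle: every nontrivial estimate has already been isolated in Lemma~\ref{lm:tr.cc}, Proposition~\ref{pr:teps.sum} and Lemma~\ref{lm:fb}; the proof is a direct specialization to $y_k=x_k$.
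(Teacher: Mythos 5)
Your proof is correct and follows essentially the same route as the paper, whose proof of this proposition is simply a pointer to Lemma~\ref{lm:fb} and Corollary~\ref{cr:fb}: you specialize the argument of Lemma~\ref{lm:fb} to $x=x_{k-1}$, $y=z=x_k$, using the transportation formula of Lemma~\ref{lm:tr.cc} and the sum rule of Proposition~\ref{pr:teps.sum} exactly as the paper does. Your observation that the tolerance should read $\sigma^2\norm{x_k-x_{k-1}}^2$ (with the stated $\sigma\norm{x_k-x_{k-1}}^2$ being a weaker, still valid bound since $\sigma\in(0,1)$) is also accurate.
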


\begin{proof}
  See the proofs of Lemma~\ref{lm:fb} and Corollary~\ref{cr:fb}.
\end{proof}

\section{Tseng's Modified Forward-Backward splitting method}
\label{sec:ts.mfb}

In~\cite{MR1756912} it was proved that
Tseng's Modified Forward-Backward splitting method~\cite{MR1741147}
is a particular case of the HPE method.
Here we will cast this result in the framework of approximate
resolvents,
prove that the iteration maps of the Tseng's Modified
Forward-Backward splitting method are specializations or selections of
$\sigma$-approximate resolvents and that the sequence of its iteration maps
satisfies properties {\bf P1}, {\bf P2}.
Observe that, as a consequence, sequences generated by
inexact Tseng's Modified Forward-Backward splitting method
with summable errors still converge
weakly to solutions of the inclusion problem, if any.
This result follows also from the fact that Tseng's method is a
particular case of the HPE (as proved in~\cite{MR1756912}) and that
the HPE with summable errors converges (as proved
in~\cite{MR1850681}).
Convergence of Tseng's  Modified Forward-Backward splitting method
with summable errors  was 
obtained in \cite{MR2854581} by a detailed
analysis of the Tseng's Modified Forward-Backward splitting method.
Here we  see
that this result can be easily and effortlessly derived as a particular case of a generic
convergence result.

In this section we  consider the inclusion problem
\[
0\in (A+B)\,x
\]
where 
\\
\\
{\bf t1)} ${A}:X\to X$ is monotone
 and $L$-Lipschitz continuous ($L>0$);\\
{\bf t2)} ${B}:X\tos X$ is maximal monotone.
\\
\\
The exact Tseng's Modified Forward-Backward Splitting method
(without the auxiliary projection step)
proceeds as follows:
\\
\\
\noindent
{\sc Tseng's Modified Forward-Backward method}~\cite{MR1741147}\\
Choose $0<\underline\lambda\leq\bar\lambda< 1/L$ and $x_0\in X$;\\
for $k=1,2,\dots$\\
a) choose $\lambda_k\in[\underline{\lambda},\bar\lambda]$ and
compute
\[
 y_k=( I+\lambda_k {B})^{-1}(x_{k-1}-\lambda_k {A}(x_{k-1})),\qquad
x_k=y_{k-1}-\lambda_k({A}(y_k)-{A}(x_{k-1})).
\]
\\
\\
In order to cast this method in the formalism of
Section~\ref{sec:class.fc}, define for $\lambda>0$
\begin{align}
\label{eq:ts.h}
&
H_\lambda:X\to X\times X,
&&
H_\lambda(x)=(x,J_{\lambda {B}}(x-\lambda {A}(x)))
\\
\label{eq:ts.g}
&
G_\lambda:X\times X\to X,
&&
G_\lambda(x,y)=y-\lambda ({A}(y)-{A}(x)))
\end{align}
Note that the second component of the (generic) operator $H_{\lambda}$
is $J_{\lambda {B}}\circ (I-\lambda {A})$ which is the generic
iteration map of the Forward-Backward method in \eqref{eq:fb.gim}.
Trivially,
\begin{equation}
  \label{eq:ts:xk}
  x_k=G_{\lambda_k}\circ H_{\lambda_k}\;(x_{k-1}).
\end{equation}
The next two result were essentially proved in \cite{MR1756912},
in the context of the Hybrid Proximal-Extragradient Method.
We will state and prove it in the context of $\sigma$-approximate
resolvents.

\begin{lemma}
  \label{lm:ts}
  If $A,B$  satisfy assumptions {\bf t1}, {\bf t2},
  then, for any $\lambda>0$ and $x\in X$
  \[
  G_\lambda\circ H_\lambda(x)\in J_{A+B,\sigma\;}(x)
   \]
  for $\sigma=\lambda L$.
\end{lemma}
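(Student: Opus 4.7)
The plan is to unpack the two-step map $G_\lambda\circ H_\lambda$ into the data required by Definition~\ref{df:apres} (in the form given by Proposition~\ref{pr:apr.bas}, item 3) for $J_{\lambda(A+B),\sigma}$, using $A$-Lipschitz continuity to supply the size estimate. I treat the right-hand side of the lemma as $J_{\lambda(A+B),\sigma}(x)$, paralleling Lemma~\ref{lm:fb}.

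Fix $x\in X$ and set $y = J_{\lambda B}(x-\lambda A(x))$, so that $H_\lambda(x) = (x,y)$. The resolvent equation gives
\[
b := \lambda^{-1}(x-y) - A(x)\in B(y).
\]
Define $z = G_\lambda\circ H_\lambda(x) = y - \lambda(A(y)-A(x))$, and propose the candidates
\[
v = A(y) + b,\qquad \varepsilon = 0.
\]
The first step is then to check the inclusion: since $A$ is single-valued and monotone (continuous everywhere, hence maximal), $A(y)\in A(y) = A^{[0]}(y)$ and $b\in B(y) = B^{[0]}(y)$; because $A+B$ is maximal monotone ($A$ has full domain and is continuous), Proposition~\ref{pr:teps.sum} yields $v\in (A+B)^{[0]}(y)$.

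The second step is the identity $z = x - \lambda v$. A short computation using the definition of $b$ gives
\[
\lambda v + y - x = \lambda A(y) + \lambda b + y - x = \lambda A(y) + (x - y - \lambda A(x)) + y - x = \lambda\bigl(A(y)-A(x)\bigr),
\]
which simultaneously confirms $z = y - \lambda(A(y)-A(x)) = x - \lambda v$ and sets up the key estimate. Invoking the $L$-Lipschitz property of $A$,
\[
\norm{\lambda v + y - x}^2 + 2\lambda\varepsilon = \lambda^2\norm{A(y)-A(x)}^2 \leq \lambda^2 L^2\norm{y-x}^2 = \sigma^2\norm{y-x}^2,
\]
where $\sigma = \lambda L$. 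Combining this with the inclusion $v\in (A+B)^{[\varepsilon]}(y)$ and the identity $z = x - \lambda v$, Proposition~\ref{pr:apr.bas}, item~\ref{it:apr.lambda}, gives $z\in J_{\lambda(A+B),\sigma}(x)$, which is the claim.

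I do not expect a serious obstacle here: the entire content is the algebraic identification of $\lambda v + y - x$ with $\lambda(A(y)-A(x))$, after which Lipschitz continuity of $A$ does the rest, exactly as cocoercivity of $A$ drove the analogous Lemma~\ref{lm:fb}. The only small subtlety is justifying the maximality of $A+B$ (and hence the additivity Proposition~\ref{pr:teps.sum}), which follows from $A$ being continuous with full domain.
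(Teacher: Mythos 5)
Your proof is correct and follows essentially the same route as the paper's: the same element $b\in B(y)$ extracted from the resolvent equation, the same choice $v=A(y)+b$ with $\varepsilon=0$, the identity $\lambda v+y-x=\lambda(A(y)-A(x))$, and the Lipschitz bound giving $\sigma=\lambda L$. Your reading of the target set as $J_{\lambda(A+B),\sigma}(x)$ and your sign conventions are the intended ones (the paper's displayed $z=y+\lambda(A(y)-A(x))$ and its attribution of the Lipschitz inequality to \textbf{t2} are typographical slips), and your extra care about $\varepsilon=0$ and the maximal monotonicity of $A+B$ only makes the argument more complete.
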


\begin{proof}
  Take $x\in X$ and let
  \[  
  y=J_{\lambda {B}}(x-\lambda {A}(x)),\qquad
  z=y+\lambda({A}(y)-{A}(x)).
  \]
  Note that $z=G_\lambda\circ H_\lambda(x)$. Using the definition of $y$
  we have
  \[
  a:=\lambda^{-1}(x-\lambda {A}(x)-y)\in {B}(y).
  \] 
  Therefore,
  \begin{align*}
    v:=a+{A}(y)\in (A+B)(y),\qquad 
    \norm{\lambda v+x-y}^2&=\norm{\lambda({A}(y)-{A}(x))}^2\\
    &\leq (\lambda L)^2\norm{y-x}^2,
  \end{align*}
  where the inequality follows from assumption {\bf t2)}.
  To end the proof, note that $z=x-\lambda v$.
\end{proof}

\begin{corollary}
  \label{cr:ts}
  Let $A,B$ be as in {\bf t1}, {\bf t2} and $0<\underline\lambda<
  \bar\lambda<2\alpha$ and $(\lambda_k)$, $(x_k)$ be as in 
  Tseng's Modified Forward-Backward method.
  Define
  \[
  \sigma=\bar \lambda L.
  \]
  Then $0<\sigma<1$ and  for any $x\in X$ 
  \begin{equation}
    \label{eq:ts.im.0}
 G_{\lambda_k}\circ H_{\lambda_k}(x)\in J_{\lambda_k(A+B),\sigma\,}(x)
  ,\qquad k=1,2,\dots
  \end{equation}
  In particular
  \begin{equation}
    \label{eq:ts.im}
    x_k= G_{\lambda_k}\circ H_{\lambda_k} (x_{k-1})\in
    J_{\lambda_k(A+B),\sigma\,}(x_{k-1})
    ,\qquad k=1,2,\dots
  \end{equation}
  and the sequence of maps $( G_{\lambda_k}\circ H_{\lambda_k})$ satisfies
  properties {\bf P1}, {\bf P2} with respect to $(A+B)^{-1}(0)$.
\end{corollary}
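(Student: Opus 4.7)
The plan is to mimic almost verbatim the proof of Corollary~\ref{cr:fb}, replacing the role of Lemma~\ref{lm:fb} with Lemma~\ref{lm:ts}. The four assertions to verify break up cleanly: (i) the bounds on $\sigma$, (ii) the general inclusion~\eqref{eq:ts.im.0}, (iii) its specialization~\eqref{eq:ts.im} at the iterates, and (iv) the fact that the sequence $(G_{\lambda_k}\circ H_{\lambda_k})$ inherits properties \textbf{P1} and \textbf{P2} from the approximate-resolvent family.

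For (i) I would simply observe $\sigma=\bar\lambda L$ with $0<\underline\lambda\leq\bar\lambda<1/L$ (the ``$<2\alpha$'' in the statement appears to be a typo for the Tseng bound $<1/L$), which immediately gives $0<\sigma<1$. For (ii), I would define $\sigma_k=\lambda_k L$ for each $k$. Since $\lambda_k\in[\underline\lambda,\bar\lambda]$, we have $0<\sigma_k\leq\sigma<1$. Applying Lemma~\ref{lm:ts} with $\lambda=\lambda_k$ yields
\[
G_{\lambda_k}\circ H_{\lambda_k}(x)\in J_{\lambda_k(A+B),\sigma_k}(x)
\]
for every $x\in X$, and the monotonicity statement Proposition~\ref{pr:apr.bas} item~\ref{it:apr.mon} upgrades $\sigma_k$ to $\sigma$ on the right, giving~\eqref{eq:ts.im.0}.

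Assertion (iii) is then automatic: the identity $x_k=G_{\lambda_k}\circ H_{\lambda_k}(x_{k-1})$ is exactly~\eqref{eq:ts:xk}, and the inclusion in~\eqref{eq:ts.im} is~\eqref{eq:ts.im.0} evaluated at $x=x_{k-1}$. For (iv), I note that $A+B$ is maximal monotone (since $A$ is monotone, Lipschitz, and everywhere defined on $X$, while $B$ is maximal monotone, so their sum is maximal monotone by standard results), and that $\lambda_k\geq\underline\lambda>0$. Thus Theorem~\ref{th:hpe} applies to the family $(J_{\lambda_k(A+B),\sigma})_{k}$, yielding properties \textbf{P1} and \textbf{P2} with respect to $(A+B)^{-1}(0)$. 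Because \eqref{eq:ts.im.0} asserts that $\{G_{\lambda_k}\circ H_{\lambda_k}(x)\}\subset J_{\lambda_k(A+B),\sigma}(x)$ for every $x$, Proposition~\ref{pr:sp} transfers \textbf{P1}, \textbf{P2} to the (point-to-point) sequence of iteration maps $(G_{\lambda_k}\circ H_{\lambda_k})$, which is what we wanted.

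There is no real obstacle; the only point that requires a brief word is the maximal monotonicity of $A+B$, which is needed to invoke Theorem~\ref{th:hpe} on the operator $T=\lambda_k(A+B)$. Everything else is a routine assembly of Lemma~\ref{lm:ts}, Proposition~\ref{pr:apr.bas}, Proposition~\ref{pr:sp}, and Theorem~\ref{th:hpe}, exactly parallel to the Forward--Backward case.
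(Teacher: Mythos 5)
Your proof is correct and follows essentially the same route as the paper: apply Lemma~\ref{lm:ts} with $\sigma_k=\lambda_k L$, use the monotonicity of the approximate resolvent in $\sigma$ (Proposition~\ref{pr:apr.bas}, item~\ref{it:apr.mon}), and then combine Theorem~\ref{th:hpe} with Proposition~\ref{pr:sp}. Your two side remarks --- that the hypothesis $\bar\lambda<2\alpha$ is a typo for $\bar\lambda<1/L$, and that maximal monotonicity of $A+B$ must be noted before invoking Theorem~\ref{th:hpe} --- are both correct and, if anything, slightly more careful than the paper's own argument.
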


\begin{proof}
  The bounds for $\sigma$ follow trivially from its definition and
  the choices for $\underline{\lambda}$ and $\bar\lambda$ in 
  Tseng's Forward-Backward
  method.

  Define $ \sigma_k=\lambda_kL$ for $k=1,2,\dots$  Since
  $\lambda_k\in[\underline{\lambda},\bar\lambda]$ we have
  $0<\sigma_k\leq\sigma$ for all $k$.  Therefore, using also
  Lemma~\ref{lm:ts} and Proposition~\ref{pr:apr.bas}
  item~\ref{it:apr.mon}, we conclude that for any $x\in X$
  \[
  J_{\lambda_k{B}}\circ (I-\lambda_k{A})(x)\in
  J_{\lambda_k(A+B),\sigma_k}(x)\subset J_{\lambda_k(A+B),\sigma}(x),
  \quad k=1,2,\dots
  \]
  The equality in \eqref{eq:ts.im} follows trivially from the
  definition of the Tseng's Modified Forward-Backward method, while the
  inclusion follows from the above equation. To end the proof, note
  that $0<\underline\lambda<\lambda_k$ for all $k$, and use
  Theorem~\ref{th:hpe}, Proposition~\ref{pr:sp} and the above
  equation.
\end{proof}

Note that for $0<\lambda\leq\bar\lambda$, the maps $H_\lambda$, $G_\lambda$
are Lipschitz continuous with constant
\[
2+\bar\lambda L,\qquad 1+2\bar\lambda L,
\]
respectively. Hence, this method can be perturbed by  summable sequences
of errors in the evaluations of the resolvents $J_{\lambda_k {B}}$ and/or
in the evaluation of ${A}(x_k)$, ${A}(y_k)$ etc, and will still converge weakly
to a solution, if any exists.

\section{Korpelevich's method}
\label{sec:kp}

In~\cite{MR2721154} it was proved that Korpelevich's method, with
fixed stepsize, is a particular case of the HPE method.  The extension
of this result for variable stepsizes is trivial, and here we will
analyze such an extension in the framework of approximate resolvents.
Observe that, as a consequence, sequences generated by
inexact Korpelevich's method
with summable errors still converges
weakly to solutions of the inclusion problem, if any.

In this section we  consider the inclusion problem
\[
0\in A(x)+N_C(x)
\]
where 
\\
\\
{\bf k1)} ${A}:X\to X$ is monotone and $L$-Lipschitz continuous ($L>0$);
\\
{\bf k2)} $N_C$ is the normal cone operator of $C\subset X$,
a non-empty closed convex set.
\\
\\
\noindent
{\sc Korpelevich's method}\\
Choose $0<\underline\lambda\leq\bar\lambda< 1/L$ and $x_0\in X$;\\
for $k=1,2,\dots$\\
a) choose $\lambda_k\in [\underline\lambda,\bar\lambda]$ and define
\begin{equation}
  \label{eq:kp}
  y_k=P_C(x_{k-1}-\lambda_kF(x_{k-1})),\qquad
  x_k=P_C(x_{k-1}-\lambda_kF(y_k)),
\end{equation}
where $P_C$ stands for the orthogonal projection onto $C$.
\\
\\
In order to cast this method in the formalism of Section~\ref{sec:class.fc},
define for $\lambda>0$
\begin{align}
\label{eq:kp.h}
&
H_\lambda:X\to X\times X,
&&
H_\lambda(x)=(x,P_C(x-\lambda {A}(x)),
\\
\label{eq:kp.g}
&
G_\lambda:X\times X\to X,
&&
G_\lambda(x,y)=P_C(x-\lambda {A}(y)).
\end{align}
Observe that since $P_C=J_{\lambda N_C}$, the 
second component of the (generic) operator $H_{\lambda}$ is
$J_{\lambda {B}}\circ (I-\lambda {A})$ with ${B}=N_C$ which is
the generic
iteration map of the forward backward method in \eqref{eq:fb.gim}
(with ${B}=N_C$). Note also that the
map $H_\lambda$ above defined
has an equivalent expression
\[
H_\lambda(x)=(x,J_{\lambda N_C}(x-\lambda {A}(x)))
\]
which can be obtained by setting ${B}=N_C$ in \eqref{eq:ts.h} ${B}=N_C$.
Trivially,
\begin{equation}
  \label{eq:kp.im}
  x_k=G_{\lambda_k}\circ H_{\lambda_k}(x_{k-1}),\qquad k=1,2,\dots
\end{equation}
The next two result were essentially proved in \cite{MR2721154},
in the context of the Hybrid Proximal-Extragradient Method.
We will state and prove them in the context of $\sigma$-approximate
resolvents.
\begin{lemma}
  \label{lm:kp}
  If ${A}$ and $C$ satisfy assumptions {\bf k1} and {\bf k2},
  then, for any $\lambda>0$ and $x\in X$
  \[
  G_\lambda\circ H_\lambda(x)\in J_{{A}+N_C,\sigma\;}(x)
   \]
  for $\sigma=\lambda L$.
\end{lemma}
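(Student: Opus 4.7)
The plan is to follow the template of Lemma~\ref{lm:ts} for Tseng's method, but with a nonzero $\varepsilon$-enlargement term at the intermediate point $y$, which is needed because the second step of Korpelevich is itself a projection rather than a pure extragradient update. Fix $x\in X$ and set $y=P_C(x-\lambda A(x))$ and $z=P_C(x-\lambda A(y))=G_\lambda\circ H_\lambda(x)$. The projection characterization of $z$ gives $b:=\lambda^{-1}(x-\lambda A(y)-z)\in N_C(z)$, and then $v:=A(y)+b$ satisfies $\lambda v=x-z$. In view of Proposition~\ref{pr:apr.bas}\,item~\ref{it:apr.lambda}, it suffices to produce $\varepsilon\geq 0$ with $v\in (A+N_C)^{[\varepsilon]}(y)$ and $\|\lambda v+y-x\|^2+2\lambda\varepsilon\leq\sigma^2\|y-x\|^2$.

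The natural choice is $\varepsilon:=\langle b,z-y\rangle$; this is nonnegative because the projection inequality at $z$, tested against $y\in C$, gives $\langle x-\lambda A(y)-z,z-y\rangle\geq 0$. To verify $b\in N_C^{[\varepsilon]}(y)$, take any $w\in C$ and $n\in N_C(w)$: since $b\in N_C(z)$ one has $\langle z-w,b\rangle\geq 0$, and since $y\in C$, the definition of $N_C(w)$ gives $\langle y-w,n\rangle\leq 0$; combined with $\langle y-z,b\rangle=-\varepsilon$, this yields $\langle y-w,b-n\rangle\geq -\varepsilon$. Because $A(y)\in A^{[0]}(y)$, Proposition~\ref{pr:teps.sum} then delivers $v\in (A+N_C)^{[\varepsilon]}(y)$.

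For the quadratic estimate, note $\lambda v+y-x=y-z$ and $2\lambda\varepsilon=2\langle x-\lambda A(y)-z,z-y\rangle$; the polarization identity applied to $2\langle x-z,z-y\rangle$ reduces $\|\lambda v+y-x\|^2+2\lambda\varepsilon$ to $\|x-y\|^2-\|x-z\|^2-2\lambda\langle A(y),z-y\rangle$. The projection inequality for $y$ tested against $z\in C$, combined with polarization, gives $\|x-y\|^2+\|z-y\|^2-\|x-z\|^2\leq 2\lambda\langle A(x),z-y\rangle$. Splitting $A(x)=A(y)+(A(x)-A(y))$ and bounding the cross term $2\lambda\langle A(x)-A(y),z-y\rangle\leq \lambda^2 L^2\|x-y\|^2+\|z-y\|^2$ via Cauchy--Schwarz, the Lipschitz hypothesis and Young's inequality, the desired bound $\|y-z\|^2+2\lambda\varepsilon\leq \lambda^2 L^2\|y-x\|^2=\sigma^2\|y-x\|^2$ falls out after rearrangement, and Proposition~\ref{pr:apr.bas}\,item~\ref{it:apr.lambda} gives $z\in J_{\lambda(A+N_C),\sigma}(x)$. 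The main obstacle is calibrating $\varepsilon$: it must be large enough for the enlargement inclusion $b\in N_C^{[\varepsilon]}(y)$ to hold, yet small enough that $2\lambda\varepsilon$, together with $\|y-z\|^2$, is absorbed into the tolerance $\sigma^2\|y-x\|^2$ on the right-hand side; the identity $\varepsilon=\langle b,z-y\rangle$ is precisely what balances these two requirements.
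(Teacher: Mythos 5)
Your proof is correct and follows essentially the same route as the paper's: the same points $y,z$, the same residual $v$ and error $\varepsilon=\langle b,z-y\rangle$, and an algebraically equivalent quadratic estimate (your polarization-plus-Young computation discards exactly the same nonnegative term $\|\lambda(A(x)-A(y))-(z-y)\|^2$ that the paper drops after completing the square). The only cosmetic difference is that you verify $b\in (N_C)^{[\varepsilon]}(y)$ directly from the definition of the enlargement, whereas the paper routes through $b\in\partial_\varepsilon\delta_C(y)$ and Proposition~\ref{pr:teps.bas}, item~\ref{it:te.sd}.
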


\begin{proof}
  Take $x\in X$ and let
  \begin{align*}
    y&=P_C(x-\lambda {A}(x)),\quad z=P_C(x-\lambda {A}(y)).
  \end{align*}
  Note that $z= G_\lambda\circ H_\lambda(x)$. 
  Define
  \begin{align*}
    \eta&=\frac 1\lambda(x-\lambda {A}(x)-y),\\
    \nu&=\frac 1\lambda(x-\lambda {A}(y)-z),\quad
    \varepsilon=\inner{\nu}{z-y},\quad
    v=\nu+{A}(y).
  \end{align*}
  Trivially, $\eta\in N_C(y)$ and $\nu\in N_C(z)=\partial
  \delta_C (z)$. 
  Therefore,
  \[
  \nu\in\partial_\varepsilon\delta_C(y)\subset
  (\partial\delta_C)^{[\varepsilon]}(y)= (N_C)^{[\varepsilon]}(y)
  \]
  and
  \begin{equation}
    \label{eq:kk1}
    v\in ({A}+N_C)^{[\varepsilon]}(y),\qquad z=x-\lambda v.
  \end{equation}
  Therefore
  \begin{align*}
    \norm{\lambda v+y-x}^2+2\lambda\varepsilon&=\norm{y-z}^2+2
   \lambda\inner{\nu}{z-y}\\
  &=\norm{y-z}^2+
    2\lambda\inner{\nu-\eta}{z-y}+
    2\lambda\inner{\eta}{z-y}
\\
  &\leq \norm{y-z}^2+
    2\lambda\inner{\nu-\eta}{z-y},
  \end{align*}
  where the inequality follows from the inclusions  $\eta\in N_C(y)$,
  $z\in C$.
  Direct algebraic manipulations yield
\begin{align*}
   \norm{y-z}^2+
    2\lambda\inner{\nu-\eta}{z-y}&=
  \norm{\lambda(\nu-\eta)+z-y}^2-\norm{\lambda(\nu-\eta)}^2\\
   &\leq  \norm{\lambda(\nu-\eta)+z-y}^2\\
   &=\norm{\lambda({A}(x)-{A}(y))}^2.
  \end{align*}
  Combining the two above equations, and using assumption {\bf k1},
  we conclude that
  \[
   \norm{\lambda v+y-x}^2+2\lambda\varepsilon\leq (\lambda L)^2\norm{y-x}^2.
  \]
  The conclusion follows combining this inequality with \eqref{eq:kk1}.
\end{proof}

\begin{corollary}
  \label{cr:kp}
  Let ${A}, C$ be as in {\bf k1}, {\bf k2}, and $0<\underline\lambda<
  \bar\lambda<2\alpha$ and $(\lambda_k)$, $(x_k)$ be as in Korpelevich's
  method.
  Define
  \[
  \sigma=\bar \lambda L.
  \]
  Then $0<\sigma<1$ and for any $x\in X$ 
  \begin{equation*}
    G_{\lambda_k}\circ H_{\lambda_k}(x)\in J_{\lambda_k(A+B),\sigma\,}(x)
    ,\qquad k=1,2,\dots
  \end{equation*}
  In particular
  \begin{equation*}
    x_k= G_{\lambda_k}\circ H_{\lambda_k} (x_{k-1})\in
    J_{\lambda_k(A+B),\sigma\,}(x_{k-1})
    ,\qquad k=1,2,\dots
  \end{equation*}
  and the sequence of maps $( G_{\lambda_k}\circ H_{\lambda_k})$ satisfies
  properties {\bf P1}, {\bf P2} with respect to $({A}+N_C)^{-1}(0)$.
\end{corollary}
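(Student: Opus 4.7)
The plan is to mimic step-by-step the proof of Corollary~\ref{cr:ts}, since the only real content of Korpelevich's method that differs from Tseng's has already been isolated in Lemma~\ref{lm:kp}. First I would verify that $0<\sigma<1$: by definition $\sigma=\bar\lambda L$, and the bounds $0<\underline\lambda\leq\bar\lambda<1/L$ chosen in the algorithm immediately give $0<\sigma<1$.

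Next I would handle the two inclusions. For each $k$, set $\sigma_k=\lambda_k L$; since $\lambda_k\in[\underline\lambda,\bar\lambda]$, we have $0<\sigma_k\leq\sigma$. Applying Lemma~\ref{lm:kp} (with $\lambda=\lambda_k$) yields, for every $x\in X$,
\[
G_{\lambda_k}\circ H_{\lambda_k}(x)\in J_{\lambda_k(A+N_C),\sigma_k}(x),
\]
and then monotonicity of the $\sigma$-approximate resolvent in $\sigma$ (Proposition~\ref{pr:apr.bas}, item~\ref{it:apr.mon}) upgrades this to the claimed inclusion in $J_{\lambda_k(A+N_C),\sigma}(x)$. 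The first displayed equality of the corollary (with $x_{k-1}$ in place of $x$) is then the defining recursion~\eqref{eq:kp.im}, and the inclusion in $J_{\lambda_k(A+N_C),\sigma}(x_{k-1})$ is the special case of the previous inclusion evaluated at $x=x_{k-1}$.

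Finally, to establish properties \textbf{P1} and \textbf{P2} for the family $(G_{\lambda_k}\circ H_{\lambda_k})$: Theorem~\ref{th:hpe} applies to the sequence $(J_{\lambda_k(A+N_C),\sigma})$ because $A+N_C$ is maximal monotone (the standard sum rule applies since $\mathrm{dom}\, A=X$), $\sigma\in[0,1)$, and $\lambda_k\geq\underline\lambda>0$; hence $(J_{\lambda_k(A+N_C),\sigma})$ satisfies \textbf{P1}, \textbf{P2} with respect to $(A+N_C)^{-1}(0)$. The inclusion
\[
G_{\lambda_k}\circ H_{\lambda_k}(x)\subset J_{\lambda_k(A+N_C),\sigma}(x),\qquad x\in X,\ k=1,2,\dots
\]
(valid because the left-hand side is a singleton in the right-hand set) together with Proposition~\ref{pr:sp} transfers \textbf{P1}, \textbf{P2} to $(G_{\lambda_k}\circ H_{\lambda_k})$.

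I do not anticipate any genuine obstacle: all analytic content is packed into Lemma~\ref{lm:kp}, and the rest is routine bookkeeping. The only mild subtlety is being careful to invoke the monotonicity in $\sigma$ before calling Theorem~\ref{th:hpe}, so that a single $\sigma<1$ (rather than the $k$-dependent $\sigma_k$) controls the whole sequence.
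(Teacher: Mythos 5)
Your proposal is correct and follows exactly the route the paper intends: the paper's proof is literally ``use Lemma~\ref{lm:kp} and the same reasoning as in Corollaries~\ref{cr:fb} and~\ref{cr:ts},'' and you have spelled out that reasoning faithfully (introduce $\sigma_k=\lambda_k L$, apply Lemma~\ref{lm:kp}, upgrade via Proposition~\ref{pr:apr.bas} item~\ref{it:apr.mon}, then invoke Theorem~\ref{th:hpe} and Proposition~\ref{pr:sp}). You also correctly read past the statement's typo $\bar\lambda<2\alpha$ (which should be the algorithm's bound $\bar\lambda<1/L$) and sensibly noted the maximal monotonicity of $A+N_C$ needed for Theorem~\ref{th:hpe}.
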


\begin{proof}
  Use Lemma~\ref{lm:kp} and the same reasoning as in
  corollaries~\ref{cr:fb} and \ref{cr:ts}.
\end{proof}

Endowing $X\times X$ with the canonical inner product 
of Hilbert space products
\[
\inner{(x,y)}{(x',y')}=\inner{x}{x'}+\inner{y}{y'},
\]
it is trivial to check that for $0<\lambda\leq\bar\lambda$,
the maps $H_\lambda$ and $G_\lambda$ are Lipschitz continuous with
constants
\[
2+\bar\lambda L,\qquad 1+\bar\lambda L,
\]
respectively. Hence, one can analyze 
Korpelevich's method with (summable) errors in the projections and/or
evaluations of ${A}$ etc.

\section{Discussion}
\label{sec:conc}

We provided a general definition of generic methods by means
of recursive inclusions and sequences of point-to-set maps.
Using this formulation, we 
defined two properties of those maps which guarantee that
the associated method is Fej\'er convergent and
generates sequences 
which converge to a solution, if any, even when perturbed
by summable errors.

We think these results obviate the summable error convergence analysis
of a number of convergent Fej\'er methods.

The framework for the analysis of Fej\'er convergent methods introduced
here is, of course, not general enough to encompasses \emph{all}
of these methods. Indeed, if $X=\R$, $\Omega=\{0\}$ and
\[F(x)=
\begin{cases}
  -x& x>0,\\
   x/2,& x\leq 0
\end{cases}
\]
then any sequence $(x_n)$ satisfying $x_n=F(x_{n-1})$ is Fej\'er
convergent to $\{0\}$ and converges to $0$. However, the sequence
$(F_n=F)$ does not satisfies {\bf P2}.

It has been since long recognized that Korpelevich's method (and may be
even the Forward-Backward method) was an ``inexact'' version of the
proximal point method. However, the nature and degree of this
``inexactness'' were not known.
We provided a formal definition of approximate solutions of the prox by
means of the $\sigma$-approximate resolvent which, while encompassing
many classical decomposition schemes, also guarantees weak convergence
of sequences generated by such approximate resolvents (even in the
presence of additional summable errors).

\def\cprime{$'$}

\end{document}